\newtheorem{proposition}{Proposition}[section]
\newtheorem{lemma}[proposition]{Lemma}
\newtheorem{corollary}[proposition]{Corollary}
\newtheorem{theorem}[proposition]{Theorem}
\newtheorem{example}[proposition]{Example}
\newcommand{\eps}{\varepsilon}
\newcommand{\ph}{\varphi}
\newcommand\C{{\mathbb C}}
\newcommand\Q{{\mathbb Q}}
\newcommand\R{{\mathbb R}}
\newcommand\Z{{\mathbb Z}}
\newcommand{\norm}{{\mathcal{N}}}
\newcommand{\OO}{{\mathcal{O}}}
\newcommand{\tilc}{\tilde{c}}
\newcommand{\tilf}{\tilde{f}}
\newcommand{\height}{{\mathrm{h}}}
\newcommand{\ord}\nu
\newcommand{\gerp}{{\mathfrak{p}}}
\newcommand{\gerP}{{\mathfrak{P}}}
\newcommand{\bfa}{{\mathbf a}}
\numberwithin{equation}{section}
\title{Binary polynomial power sums  vanishing at roots of unity}
\author{Yuri Bilu\\
IMB, Université de Bordeaux \& CNRS\\
E-mail: yuri@math.u-bordeaux.fr,
\and
Florian Luca\\
School of Maths, Wits University, South Africa\\
Research Group in Algebraic Structures and Applications,\\ 
King Abdulaziz University, Jeddah, Saudi Arabia\\
Centro de Ciencias Matemáticas,\\ 
UNAM, Morelia, Mexico\\
E-mail: Florian.Luca@wits.ac.za}
\date{}
\renewcommand*\l@section[2]{%
  \ifnum \c@tocdepth >\z@
    \addpenalty\@secpenalty
    \addvspace{0.2em \@plus\p@}%
    \setlength\@tempdima{1.5em}%
    \begingroup
      \parindent \z@ \rightskip \@pnumwidth
      \parfillskip -\@pnumwidth
      \leavevmode \bfseries
      \advance\leftskip\@tempdima
      \hskip -\leftskip
      #1\nobreak\hfil \nobreak\hb@xt@\@pnumwidth{\hss #2}\par
    \endgroup
  \fi}
\numberwithin{equation}{section}
\begin{document}

\baselineskip=17pt

\hbadness 300

\hfuzz 5pt

    \makeatletter
    \let\@fnsymbol\@alph
    \makeatother

\maketitle

\begin{abstract}
Let ${c_1(x),c_2(x),f_1(x),f_2(x)}$ be  polynomials with rational coefficients. 
The ``obvious'' exceptions being excluded, there can be at most finitely many roots of unity among the zeros of the polynomials  ${c_1(x)f_1(x)^n+c_2(x)f_2(x)^n}$ with ${n=1,2\ldots}$. We estimate the orders of these roots of unity in terms of the degrees and the heights of the polynomials $c_i$ and $f_i$.



2020 \emph{Mathematics Subject Classification}: Primary 11D61; Secondary 11G50, 11J86.

\emph{Key words and phrases}: polynomial power sums, roots of unity, primitive divisors.

\end{abstract}

{\scriptsize

\tableofcontents

}

\section{Introduction}
\label{sintr}

Let $c_1(x),c_2(x),f_1(x),f_2(x)$ be non-zero polynomials in ${\mathbb Q}[x]$. We denote by ${\bf u}:=\{u_n(x)\}_{n\ge 1}\subset {\mathbb Q}[x]$ 
 the sequence of polynomials given by 
\begin{equation}
\label{eq:1}
u_n(x)=c_1(x)f_1(x)^n+c_2(x)f_2(x)^n\quad {\text{\rm for~all}}\quad n\ge 1.
\end{equation}
We study roots of unity $\zeta$ such that ${u_n(\zeta)=0}$ for some~$n$. It can happen accidentally that  $u_n(x)$ is the zero polynomial for some~$n$. We ignore these~$n$. 
We would like to show that aside from some exceptional situations, the following holds true:
there exist at most finitely many roots of unity~$\zeta$ such that for some~$n$ the polynomial $u_n(x) $ is not identically zero but ${u_n(\zeta)=0}$.

The following example shows that we indeed have to exclude some exceptional cases. 

\begin{example}
\label{exinf}
Let $a,b$ be integers with $b$ non-zero,
and assume that 
$$
c_2(x)/c_1(x)=\delta x^a, \qquad f_2(x)/f_1(x)=\eps x^b, \qquad \delta,\eps\in \{1,- 1\}.
$$ 
We then get
$$
u_n(x)=c_1(x)f_1(x)^n(1+\delta \eps^n x^{a+bn})
$$
and we see that if $x=\zeta$ is such that $\zeta^{a+bn}=-\delta\eps^n$, then ${u_n(\zeta)=0}$.
The condition that $b\ne 0$ insures that $u_n(x)$ is non-zero for $n$ sufficiently large (in fact, for all $n$ except eventually one of them, namely $n=-a/b$), and every $u_n(x)$ vanishes at the roots of unity of order ${|a+bn|}$ or ${2|a+bn|}$ depending on the sign of $\delta\eps^n$. 
\end{example}

It turns out that this example is the only case when the polynomials  $u_n(x)$ vanish at infinitely many roots of unity. We have the following theorem.

\begin{theorem} 
\label{thm:thmmain}
Let ${c_1(x),c_2(x),f_1(x),f_2(x)\in \Q[x]}$ be non-zero polynomials. For a positive integer~$n$   define $u_n(x)$ as in~\eqref{eq:1}. Then the following two conditions are equivalent.
\begin{enumerate}
\item
\label{iinf}
There exist infinitely many roots of unity~$\zeta$ such that for some~$n$ the polynomial $u_n(x) $ is not identically zero but ${u_n(\zeta)=0}$.

\item
\label{iex}
There exist ${a,b \in \Z}$ with ${b\ne 0}$ and ${\delta,\eps \in \{1,-1\}}$ such that 
$$
c_2(x)/c_1(x)=\delta x^a, \qquad f_2(x)/f_1(x)=\eps x^b.
$$
\end{enumerate}
\end{theorem}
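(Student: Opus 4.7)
The implication (\ref{iex})~$\Rightarrow$~(\ref{iinf}) is already contained in Example~\ref{exinf}, so assume~(\ref{iinf}): there is an infinite set $\Sigma$ of roots of unity, and an assignment $\zeta\mapsto n(\zeta)$, with $u_{n(\zeta)}\not\equiv 0$ yet $u_{n(\zeta)}(\zeta)=0$. Since each nonzero $u_n$ has only finitely many zeros, pigeonhole forces $n(\zeta)$ to be unbounded along $\Sigma$. Discarding the finitely many $\zeta\in\Sigma$ that annihilate $c_1c_2f_1f_2$, the vanishing condition rearranges to
$$
\Bigl(\frac{f_2(\zeta)}{f_1(\zeta)}\Bigr)^{n(\zeta)} \;=\; -\frac{c_1(\zeta)}{c_2(\zeta)}.
$$

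Next I pass to the Weil (absolute logarithmic) height. Since $h(\zeta)=0$ and the polynomials $c_i,f_i$ are fixed, both $h(f_2(\zeta)/f_1(\zeta))$ and $h(c_1(\zeta)/c_2(\zeta))$ are bounded above by a constant depending only on the input data. The identity $n(\zeta)\cdot h(f_2(\zeta)/f_1(\zeta))=h(c_1(\zeta)/c_2(\zeta))$, combined with the unboundedness of $n(\zeta)$, forces $h(f_2(\zeta)/f_1(\zeta))=0$ for infinitely many $\zeta\in\Sigma$. By Kronecker's theorem, for each such $\zeta$ the value $f_2(\zeta)/f_1(\zeta)$ is itself a root of unity.

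The crucial step is now a torsion-point statement. I would apply Laurent's theorem (the Manin--Mumford conjecture for $\mathbb{G}_m^2$) to the algebraic curve $\{yf_1(x)=f_2(x)\}$: a plane irreducible curve carrying infinitely many torsion points must be a torsion coset of a positive-dimensional algebraic subgroup. The only such cosets compatible with being the graph of a rational function $y=R(x)$ are cut out by $y=\eta x^b$ with $\eta$ a root of unity and $b\in\Z$. Thus $f_2(x)/f_1(x)=\eta x^b$, and since this ratio lies in $\Q(x)$ we have $\eta\in\{\pm 1\}$; write $\eta=\eps$.

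Substituting $f_2=\eps x^b f_1$ gives $u_n(x)=f_1(x)^n\bigl(c_1(x)+\eps^n x^{bn}c_2(x)\bigr)$. If $b=0$, the inner factor collapses to one of the two fixed polynomials $c_1\pm c_2$, so the union of its root-of-unity zeros over all $n$ is finite, contradicting $|\Sigma|=\infty$; therefore $b\ne 0$. With $b\ne 0$, the condition $u_{n(\zeta)}(\zeta)=0$ becomes $c_1(\zeta)/c_2(\zeta)=-\eps^{n(\zeta)}\zeta^{bn(\zeta)}$, a root of unity for infinitely many $\zeta$. A second application of Laurent's theorem, now to the graph of $c_1/c_2$, produces $c_2(x)/c_1(x)=\delta x^a$ with $\delta\in\{\pm 1\}$ and $a\in\Z$, completing the implication. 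The pivot and main obstacle of the whole plan is the invocation of Manin--Mumford for tori; everything else is height bookkeeping and routine algebraic rearrangement.
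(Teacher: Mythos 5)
Your overall strategy differs substantially from the paper's: you aim to prove the qualitative Theorem~\ref{thm:thmmain} directly by a heights-plus-torsion-points argument, while the paper deduces it from the fully quantitative Theorem~\ref{thm:thmmain1}, whose proof runs through Schinzel's primitive-divisor theorem, explicit cyclotomic estimates, and Mann's theorem. That is a legitimate alternative plan (indeed the paper acknowledges that the qualitative statement follows from unlikely-intersection results), but as written it contains a genuine gap in the central height step.

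The gap is the sentence asserting that the relation
$n(\zeta)\,\height\bigl(f_2(\zeta)/f_1(\zeta)\bigr)=\height\bigl(c_1(\zeta)/c_2(\zeta)\bigr)$,
together with the unboundedness of $n(\zeta)$, ``forces $\height(f_2(\zeta)/f_1(\zeta))=0$ for infinitely many $\zeta$.'' What actually follows is only that $\height(f_2(\zeta)/f_1(\zeta))\to 0$ along the sequence; a sequence of positive heights tending to~$0$ need not contain any zeros, so Kronecker's theorem is not applicable and you never reach the torsion-point hypothesis that Laurent's theorem requires. To close the gap you need an input that prevents heights on the curve $\{yf_1(x)=f_2(x)\}\subset\mathbb{G}_m^2$ from accumulating at~$0$ unless the curve is a torsion coset. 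The natural choice is Zhang's theorem (Bogomolov for tori): a curve in $\mathbb{G}_m^2$ that is not a torsion coset has a positive essential minimum, so the height cannot tend to~$0$ along an infinite sequence of points with torsion first coordinate. Once you invoke Zhang, you in fact get the torsion-coset conclusion directly and Laurent's theorem becomes redundant; alternatively, one can run the whole thing through Bombieri--Masser--Zannier--Maurin as the paper suggests. The remaining steps of your argument (pigeonhole to force $n(\zeta)\to\infty$, the identification of one-dimensional torsion cosets that are graphs, the case $b=0$, and the second application to $c_1/c_2$) are sound.

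Two further remarks. First, even after repair your route yields only the qualitative equivalence; it gives no bound of the shape~\eqref{eupperm}, because Zhang's and Laurent's theorems are not effective in the relevant parameters, whereas the paper's proof of Theorem~\ref{thm:thmmain1} is designed precisely to produce such a bound. Second, when you derive $f_2/f_1=\eps x^b$ with $\eps$ a root of unity lying in $\Q$, you should also note that the argument silently assumes $f_1,f_2$ coprime (or work with the reduced fraction) before concluding from the torsion-coset equation that the ratio is a monomial; this is harmless but worth saying.
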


It is not hard to derive this theorem from classical results on unlikely intersection like the Theorem of Bombieri-Masser-Zannier-Maurin~\cite{BHMZ10,BMZ99,Ma08}. See also the recent work of Ostafe and Shparlinski~\cite{Os16,OS20},  especially Theorem~2.11 and Corollary~2.14   in~\cite{OS20}. 

However, we are mainly interested in a quantitative statement: when  condition~\ref{iex} of Theorem~\ref{thm:thmmain} is not satisfied, we want to bound the orders of the roots of unity~$\zeta$ such that ${u_n(\zeta)=0}$ for some~$n$, in terms of the degrees and the heights of our polynomials $f_i,c_i$. To the best of our knowledge, no quantitative version of the Bombieri-Masser-Zannier-Maurin theorem is available which would imply such a bound. 

To state our result, let us recall the definition of the height of a non-zero polynomial  in $\Q[x]$. The height of a primitive vector ${\bfa=(a_1, \ldots,a_k)\in \Z^{k}}$ 
(\textit{primitive} means that ${\gcd(a_1,\ldots, a_k)=1}$) is defined by 
$$
\height(\bfa):=\log\max\{|a_1|, \ldots, |a_k|\}. 
$$
In general, given a non-zero vector ${\bfa\in \Q^{k+1}}$, there exists ${\lambda \in \Q^\times}$, well defined up to multiplication by~$\pm1$,  such that ${\bfa^\ast=\lambda \bfa}$ is primitive, and we set ${\height(\bfa):=\height(\bfa^\ast)}$. 

We define the height of a non-zero polynomial ${g(x)\in \Q[x]}$ as the height of the vector of its  coefficients. More generally, we define the height of a non-zero vector ${(g_1, \ldots, g_k)\in \Q[x]^k}$ as the height of the vector formed of the  coefficients of all polynomials $g_1, \ldots, g_k$. 

We have the following theorem.

\begin{theorem} 
\label{thm:thmmain1}
Let ${c_1(x),c_2(x),f_1(x),f_2(x)\in \Q[x]}$ be non-zero polynomials such that  condition~\ref{iex} of Theorem~\ref{thm:thmmain} is not satisfied. 
Set
\begin{align*}
D&:=\max\{\deg c_1,\deg c_2,\deg f_1, \deg f_2\},\\
X&:=\max\{3,\height(c_1,c_2),\height(f_1,f_2)\}.
\end{align*}
Let~$m$ be a positive integer and~$\zeta$  a primitive $m$th root of unity such that  for some~$n$ the polynomial $u_n(x) $ is not identically zero but ${u_n(\zeta)=0}$. Then
\begin{equation}
\label{eupperm}
m\le e^{100D(X+D)}.  
\end{equation}
\end{theorem}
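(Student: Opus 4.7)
The plan is to extract from $u_n(\zeta)=0$ the equation
$$
\omega^n = \beta, \qquad \omega := f_1(\zeta)/f_2(\zeta), \qquad \beta := -c_2(\zeta)/c_1(\zeta),
$$
after dispatching the easy degenerate case in which one of $c_1(\zeta), c_2(\zeta), f_1(\zeta), f_2(\zeta)$ vanishes: then $\Phi_m(x)$ divides the product $c_1c_2f_1f_2$ in $\Q[x]$, so $\phi(m) \le 4D$ and the desired bound is trivial. Since $\zeta$ has every archimedean absolute value equal to $1$, the elementary estimate $h(P(\zeta)) \le h(P) + \log(\deg P + 1)$ yields $h(\omega), h(\beta) \le 2X + O(\log D)$.

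The argument then splits according to whether $\omega$ is a root of unity. If it is not, then $\omega$ is a non-torsion element of the cyclotomic field $\Q(\zeta) \subset \Q^{\mathrm{ab}}$, so the Amoroso--Dvornicich lower bound for heights in abelian extensions applies, giving the absolute constant $h(\omega) \ge \log 5 / 12$. Combined with $n \cdot h(\omega) = h(\omega^n) = h(\beta)$ this forces $n = O(X + \log D)$. Since $\Phi_m \mid u_n$ in $\Q[x]$ and $\deg u_n \le (n+1)D$, one obtains $\phi(m) \le (n+1)D$, whence $m$ is at worst polynomial in $DX$ --- comfortably below the claimed ceiling.

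The substantive case is when $\omega$ (and therefore $\beta = \omega^n$) is a root of unity. Then $(\zeta, \omega)$ is a torsion point of $\mathbb{G}_m^2$ lying on the curve $V \colon f_1(x) - y f_2(x) = 0$. If $V$ is not a translate of an algebraic subgroup --- equivalently $f_1/f_2 \ne \alpha x^b$ for any $\alpha \in \Q^\ast$, $b \in \Z$ --- then a quantitative form of the Ihara--Serre--Tate theorem (\`a la Beukers--Smyth or via direct resultant estimates for $\mathrm{Res}_y(f_1(x) - yf_2(x), \Phi_k(y))$) bounds the order of torsion points on $V$ in terms of $\deg f_1, \deg f_2$, hence bounds $m$. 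Otherwise $f_1/f_2 = \alpha x^b$; the rationality of $\alpha$ together with $\omega = \alpha \zeta^b$ being a root of unity forces $\alpha = \eps \in \{1,-1\}$, recovering half of condition~\ref{iex}. The equation $\omega^n = \beta$ now reads $-c_2(\zeta)/c_1(\zeta) = \eps^n \zeta^{bn}$, making $(\zeta, c_2(\zeta)/c_1(\zeta))$ a torsion point of $V' \colon c_2(x) - w c_1(x) = 0$. Running the same alternative on $V'$: either $c_2/c_1 = \delta x^a$ with $\delta \in \{1, -1\}$ --- yielding the full condition~\ref{iex} and contradicting the hypothesis --- or $V'$ is not a coset and quantitative Lang on $V'$ bounds $m$.

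The principal obstacle is making the quantitative Lang step effective with constants compatible with the stated bound $e^{100D(X+D)}$; most of the technical work concentrates on producing explicit resultant estimates for $V$ and $V'$, and invoking Baker-type lower bounds for linear forms in two logarithms where necessary, from which exponential expressions of the claimed shape emerge naturally.
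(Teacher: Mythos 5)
Your strategy is genuinely different from the paper's. The paper partitions according to the nature of the roots of ${f_1 f_2}$ (a non-zero non-cyclotomic root, a cyclotomic root, or no root but $0$), and then deploys, respectively, a fully explicit version of Schinzel's primitive divisor theorem for ${\gamma^n-1}$, a Liouville-type archimedean argument, and Mann's theorem on vanishing sums of roots of unity. Your partition instead asks whether ${\omega=f_1(\zeta)/f_2(\zeta)}$ is torsion. The non-torsion branch, via Amoroso--Dvornicich, is clean, correct, and in fact yields a \emph{polynomial} bound on $m$ in that branch, which is better than the paper's exponential bound; this is a genuinely nicer idea than what the paper does in the corresponding situations.

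The torsion branch, however, has a real gap, and it is precisely the one you flag yourself. Neither of the two tools you name closes the argument. The resultant $\mathrm{Res}_y(f_1(x)-yf_2(x),\Phi_k(y))$ has degree in $x$ growing like $D\ph(k)$, and the order $k$ of $\omega$ is only known to divide $\lcm(2,m)$, so the conclusion $\ph(m)\le D\ph(k)$ is vacuous. The Beukers--Smyth style symmetrization (using that $\bar\zeta=\zeta^{-1}$ to deduce ${f_1(\zeta)f_1(1/\zeta)=f_2(\zeta)f_2(1/\zeta)}$, hence ${\ph(m)\le 2D}$ unless the relation is a polynomial identity) leaves a residual case ${f_1 f_1^*=f_2 f_2^*}$, equivalently ${f_2=\pm f_1^*}$ after the coprime reduction, which is \emph{not} excluded by the failure of condition~\ref{iex}; a concrete instance is ${f_1=x+2}$, ${f_2=2x+1}$. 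What would close the torsion branch is a Mann/Conway--Jones analysis of the vanishing sum of roots of unity hidden in ${f_1(\zeta)-\omega f_2(\zeta)=0}$ (together with the analogous sum coming from ${c_2(\zeta)+\beta c_1(\zeta)=0}$), with the usual split into non-degenerate sub-sums and a separate treatment of the two-term sub-sums that lead back to the coset alternatives. That is essentially the paper's Mann-theorem argument, but you would need to carry it out, not merely invoke ``quantitative Lang.'' As written, the torsion case is not proved.
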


The numerical constant~$100$ here is rather loose; probably, one can replace it by~$4$ or so.  
  
One may ask whether there is a bound for~$m$ which depends only on one of the parameters~$D$ or~$X$. The following examples show that this is not the case.

\begin{example}
Consider 
${u_n(x)=(2x)^n-2^m}$, 
for which
$$ 
(c_1(x),c_2(x),f_1(x),f_2(x))=(1,-2^m,2x,1), \qquad X=\max\{3,m\log 2\}. 
$$
Then
${u_m(x)=2^m(x^m-1)}$ 
vanishes at primitive $m$th roots of unity, and we have ${m\ge X/\log2}$ (provided ${m\ge 5}$). Hence no bound independent of~$X$ is possible. 
\end{example}

\begin{example}
\label{exd}
Consider
${u_n(x)=x^n+x^D+1}$, 
for which
$$
(c_1(x),c_2(x),f_1(x),f_2(x))=(1,x^D+1,x,1). 
$$
Then  $u_{2D}=(x^{3D}-1)/(x^D-1)$ vanishes at primitive $3D$th roots of unity, so we have ${m\ge 3D}$. Hence no bound independent of~$D$ is possible. 
\end{example}

One may also ask whether in Theorem~\ref{thm:thmmain1} one can bound~$n$ such that $u_n(x)$ vanishes at a root of unity. The answer is ``no'' in general. Indeed, if polynomials ${c_1(x)f_1(x)}$ and ${c_2(x)f_2(x)}$ have a common root, then every $u_n(x)$ will vanish at that root. But even if ${c_1(x)f_1(x)}$ and ${c_2(x)f_2(x)}$ do not simultaneously vanish at some root of unity, it is still possible that $u_n(x)$ vanishes at a root of unity for infinitely many~$n$. This is, for instance, the case for the sequence ${u_n(x)=x^n+x^D+1}$ from Example~\ref{exd}: it vanishes at  primitive $3D$th roots of unity whenever ${n\equiv 2D\bmod 3D}$. Nevertheless, we can bound the \textit{smallest}~$n$ with this property. Here is the precise statement.

\begin{theorem}
\label{thsmallestn}
In the set-up of Theorem~\ref{thm:thmmain}, assume that, for a given~$m$, the set of positive integers~$n$ with the property ``the polynomial $u_n(x)$ is not identically~$0$ but vanishes at an $m$th root of unity'' is not empty. Then the smallest~$n$ in this set satisfies 
$$
n \le m(\log m)^3(X+\log D). 
$$
More precisely, either there exists~$n$ in this set satisfying ${n\le 2m}$, or every~$n$ in this set satisfies ${n\le m(\log m)^3(X+\log D)}$. 
\end{theorem}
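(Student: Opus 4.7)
The plan is to recast the condition $u_n(\zeta)=0$ as an exponential equation $\alpha^n=\beta$ in the cyclotomic field $\Q(\zeta)$, where
\[
\alpha:=f_1(\zeta)/f_2(\zeta),\qquad \beta:=-c_2(\zeta)/c_1(\zeta),
\]
and then to split according to whether $\alpha$ is a root of unity. After handling the trivial subcase in which some $c_i(\zeta)$ or $f_i(\zeta)$ vanishes---which forces $u_n(\zeta)=0$ either for every $n$ or for none, and leads to a smallest $n\in\{1,2\}$ in the set---I may assume all four evaluations are non-zero. Replacing $m$ by the exact order of $\zeta$ (the stated bound is monotonic in $m$), I may also assume $\zeta$ is primitive, so $[\Q(\zeta):\Q]=\varphi(m)\le m$.

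If $\alpha$ is a root of unity, its order $k$ divides $2m$ (since $\Q(\zeta)$ contains no roots of unity of order exceeding $2m$), and the solutions of $\alpha^n=\beta$ form an arithmetic progression with common difference $k$. Its smallest positive representative $n_0$ lies in $\{1,\ldots,k\}$, so $n_0\le 2m$; the possibility $u_{n_0}\equiv 0$ is controlled by a short structural observation, namely that if two consecutive members $u_{n_0}$, $u_{n_0+k}$ of the progression are both identically zero, then $f_1=\pm f_2$ as polynomials, which collapses the situation back into the trivial subcase. This yields the first alternative of the dichotomy---some element of the set is $\le 2m$ (with at most a harmless factor absorbed into the larger bound when one has to pass from $n_0$ to $n_0+k$).

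If $\alpha$ is not a root of unity, then $\alpha^n=\beta$ has at most one solution, and $n=\height(\beta)/\height(\alpha)$ by multiplicativity of the Weil height under powering. The numerator is bounded by the standard polynomial-evaluation inequality $\height(c(\zeta))\le\height(c)+\log(D+1)$, giving $\height(\beta)\le 2X+2\log(D+1)$; the denominator is bounded below by Dobrowolski's theorem, since $\alpha$ lies in a number field of degree at most $\varphi(m)\le m$ and is not a root of unity:
\[
\height(\alpha)\ge\frac{c_0}{m}\left(\frac{\log\log 3m}{\log 3m}\right)^3.
\]
Combining these gives $n\le C\,m(\log m)^3(X+\log D)/(\log\log m)^3$, comfortably within the claimed bound. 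Because each such $\zeta$ contributes at most one $n$ to the set, this estimate applies to \emph{every} element of the set, realizing the second alternative of the dichotomy.

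The main obstacle I anticipate is the bookkeeping inside the ``$\alpha$ a root of unity'' case: verifying cleanly that at least one of $u_{n_0}$, $u_{n_0+k}$ is not identically zero---without this the progression contributes nothing to the set---and channelling the pathological subcases back to the degenerate analysis, all without losing too much in the constant in front of $m$. The height-theoretic estimate in the non-torsion case is otherwise a standard invocation of Dobrowolski's bound, and the $(\log\log m)^{-3}$ saving from the refined form of that bound provides the slack needed to fit inside the stated $(\log m)^3$ factor.
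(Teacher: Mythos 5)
Your proof is essentially the same as the paper's: both recast the condition $u_n(\zeta)=0$ as $\eta^n=\theta$ with $\eta=f_1(\zeta)/f_2(\zeta)$ and $\theta=-c_2(\zeta)/c_1(\zeta)$, split on whether $\eta$ is a root of unity, and in the non-torsion case combine $n=\height(\theta)/\height(\eta)$ with a Lehmer-type lower bound on $\height(\eta)$ (the paper invokes Voutier's explicit estimate $d\,\height(\gamma)\ge 2(\log 3d)^{-3}$ rather than Dobrowolski's, but this is the same $(\log m)^3$ bound). The worry you raise about $u_n$ possibly being identically zero in the torsion case is a technicality the paper's proof actually passes over in silence, so if anything you are being slightly more scrupulous than the published argument.
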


Throughout the article we use standard notation. We denote  $\ph(n)$  the Euler function, $\mu(n)$ the Möbius function, $\Lambda(n)$ the von Mangoldt function  and ${\omega(n)}$  the number of prime divisors of~$n$ counted without multiplicities. 

Theorems~\ref{thm:thmmain} and~\ref{thm:thmmain1} are proved in Section~\ref{sproofs}, and Theorem~\ref{thsmallestn} is proved in Section~\ref{ssmallestn}.  
In Sections~\ref{sheights},~\ref{scyclo} and~\ref{sprim} we collect various auxiliary facts used in the proof. In particular, in Section~\ref{sprim} we revisit Schinzel's classical Primitive Divisor Theorem~\cite{Sc74}. We obtain a version of this theorem fully explicit in all parameters, which is key ingredient in our proof of Theorem~\ref{thm:thmmain1}.

\section{Heights}
\label{sheights}

All results of this section are well-known, but sometimes we prefer to give a short proof than to look for a bibliographical reference. 

Recall the definition of the absolute logarithmic (projective) height. Let 
$$
\bar\alpha=(\alpha_0,\alpha_1, \ldots, \alpha_k)\in \bar\Q^{k+1}
$$
 be a non-zero vector of algebraic numbers. Pick a number field~$K$ containing all~$\alpha_i$ and normalize the absolute values of~$K$ to extend the standard absolute values of~$\Q$. With this normalization, the height 
 of~$\bar\alpha$ is defined by 
\begin{equation}
\label{eheight}
\height(\bar\alpha)=d^{-1}\sum_{v\in M_K}d_v\log\max\{|\alpha_0|_v,\ldots, |\alpha_k|_v\},  
\end{equation}
where ${d=[K:\Q]}$ and ${d_v=[K_v:\Q_v]}$ is the local degree. This definition is known to be independent of the choice of~$K$ and invariant under multiplication of~$\bar\alpha$ by a non-zero algebraic number: ${\height(\lambda\bar\alpha)=\height(\bar\alpha)}$ for ${\lambda \in \bar\Q^\times}$. When ${\alpha \in \Q^{n+1}}$ this definition coincides with the definition of height from Section~\ref{sintr}. 

Separating the contributions of infinite and finite places, we can rewrite equation~\eqref{eheight}  as 
\begin{equation}
\label{eheightold}
\begin{aligned}
\height(\bar\alpha)&= d^{-1}\sum_{K\stackrel\sigma\hookrightarrow \C} \log  \max\{|\alpha_0^\sigma|,\ldots, |\alpha_k^\sigma|\}\\
&+ d^{-1}\sum_{\gerp}\max\{-\nu_\gerp(\alpha_0), \ldots, -\nu_\gerp(\alpha_k)\}\log\norm\gerp,
\end{aligned}
\end{equation}
where the first sum is over the complex embeddings of~$K$, the second sum is over the finite primes of~$K$, and $\norm\gerp $ denotes the absolute norm of~$\gerp$. 

Now we define the height $\height(g)$ of a non-zero polynomial~$g$ with algebraic coefficients (in one or in several variables), or, more generally, the height ${\height(g_1,\ldots, g_k)}$ of a  vector of such polynomials as the height of the vector of all coefficients of those polynomials (ordered somehow).

With a standard abuse of notation, for ${\alpha\in \bar\Q}$ we write ${\height(\alpha)}$ for ${\height(1,\alpha)}$. If~$\alpha$ belongs to a number field~$K$ then 
\begin{align}
\label{ehplus}
\height(\alpha)&=d^{-1}\sum_{v\in M_K}d_v\log^+|\alpha|_v\\ 
\label{ehmin}
&=d^{-1}\sum_{v\in M_K}-d_v\log^-|\alpha|_v \qquad (\alpha\ne 0), 
\end{align}
where ${\log^+=\max\{\log, 0\}}$ and  ${\log^-=\min \{\log,0\}}$. 

\begin{lemma}
\label{lhpol}
Let ${\alpha\in \bar\Q}$ and ${f(x)\in\bar\Q[x]}$ a polynomial of degree less or equal to~$D$. Then 
\begin{equation}
\label{ehfa}
\height(f(\alpha))\le D\height(\alpha)+ \height(1,f)+\log(D+1).
\end{equation}
More generally, if ${g(x)\in\bar\Q[x]}$ is another polynomial of degree less or equal to~$D$ and ${g(\alpha) \ne0}$ then
\begin{equation}
\label{ehfga}
\height(f(\alpha)/g(\alpha))\le D\height(\alpha)+ \height(g,f)+\log(D+1).
\end{equation}
If ${f(\alpha) =0}$ then 
\begin{equation}
\label{ehroot}
\height(\alpha)\le \height(f)+\log 2. 
\end{equation}
Furthermore, let~$r$ be a non-negative integer. Then
\begin{equation}
\label{ehder}
\height(1,f^{(r)}/r!) \le \height(1,f)+D\log2. 
\end{equation}
\end{lemma}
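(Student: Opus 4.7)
The plan is to handle each of the four inequalities via the adelic formulation of the height, working place-by-place in a number field $K$ containing $\alpha$ and all coefficients of $f, g$, then summing the local estimates with the weights $d_v/d$.

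For \eqref{ehfa}, at every place $v$ I would bound
$$
|f(\alpha)|_v \le c_v \max_i |a_i|_v \max(1, |\alpha|_v)^D,
$$
where $c_v = D+1$ at archimedean places (triangle inequality) and $c_v = 1$ at non-archimedean ones (ultrametric inequality). Taking maxima with $1$, then logarithms, and summing over $v$, the archimedean $\log c_v$-contributions aggregate to $\log(D+1)$ since $\sum_{v\mid\infty} d_v/d = 1$, while the remaining terms produce $\height(1,f) + D\height(\alpha)$. For \eqref{ehfga}, I would use projective invariance to rewrite $\height(f(\alpha)/g(\alpha)) = \height(g(\alpha), f(\alpha))$ (valid because $g(\alpha)\ne 0$) and then apply the same place-by-place bound to each coordinate of the pair.

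For \eqref{ehroot}, the extra $\log 2$ arises only from the archimedean side: Cauchy's classical root bound gives $|\alpha|_v \le 1 + \max_{i<D}|a_i/a_D|_v$, and combined with the elementary inequality $1+X\le 2\max(1,X)$ for $X\ge 0$ this yields
$$
\log^+|\alpha|_v \le \log 2 + \log\max_i|a_i|_v - \log|a_D|_v.
$$
At a non-archimedean $v$, applying the ultrametric to $f(\alpha)=0$ (the maximum among the $|a_i\alpha^i|_v$ must be attained at least twice) gives the same estimate \emph{without} the $\log 2$. Summing over all places and invoking the product formula on $a_D\ne 0$ to cancel the $-\log|a_D|_v$ contributions delivers $\height(\alpha)\le \height(f)+\log 2$.

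For \eqref{ehder}, since $(f^{(r)}/r!)(x) = \sum_{i\ge r}\binom{i}{r}a_i\,x^{i-r}$, its coefficients are the coefficients of $f$ multiplied by binomial coefficients. Non-archimedean absolute values of $\binom{i}{r}$ are at most $1$, so those places contribute nothing new; at archimedean places $\binom{i}{r}\le \binom{D}{\lfloor D/2\rfloor}\le 2^D$, costing at most $D\log 2$ after summing over $v\mid\infty$. The one genuinely delicate point across all four parts is the bookkeeping between the archimedean constants ($D+1$, $2$, $2^D$) and the trivial $1$ at non-archimedean places; once one has set up the weighting by local degrees and the product-formula cancellations, each bound reduces to a one-line local estimate.
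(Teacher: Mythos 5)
Your place-by-place adelic argument for \eqref{ehfa}, \eqref{ehfga} and \eqref{ehder} is exactly the paper's proof, down to the bound $\binom{k}{r}\le 2^k\le 2^D$ for the derivative. The only divergence is \eqref{ehroot}: the paper simply cites \cite[Proposition~3.6(1)]{BB13}, while you supply a direct argument via Cauchy's root bound at archimedean places, the ultrametric inequality at finite places, and the product formula on the leading coefficient. Your argument is correct (with the tacit replacement of $D$ by the exact degree of $f$ so that the leading coefficient is non-zero), and has the minor advantage of making the lemma self-contained.
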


\begin{proof}
We start by proving~\eqref{ehfga}. By definition,
$$
\height(f(\alpha)/g(\alpha))=\height(1,f(\alpha)/g(\alpha))=\height(g(\alpha), f(\alpha)). 
$$
Write 
$$
f(x)=a_Dx^D+\cdots+a_0, \qquad g(x)=b_Dx^D+\cdots+b_0. 
$$ 
Let~$K$ be a number field containing~$\alpha$ and the coefficients of $f,g$. We set ${d=[K:\Q]}$.  For  ${v\in M_K}$ we have 
$$
|f(\alpha)|_v  \le
\begin{cases}
(D+1)|f|_v\max\{1,|\alpha|_v\}^D,& v\mid \infty,\\
|f|_v|\max\{1,|\alpha|_v\}^D,& v<\infty, 
\end{cases}
$$
where ${|f|_v =\max\{|a_0|_v, \ldots,|a_D|_v\}}$, and similarly for $g(\alpha)$. 
Hence 
\begin{align*}
\height(g(\alpha), f(\alpha))&\le d^{-1}\sum_{v\in M_K}d_v\log\max\{|g(\alpha)|_v,|f(\alpha)|_v\}\\
&\le d^{-1}\sum_{v\in M_K}d_v(\log\max\{|f_v|,|g|_v\}+D\log^+|\alpha|_v) \\
&+d^{-1}\sum_{\substack{v\in M_K\\v\mid \infty}}d_v\log(D+1)\\
&= \height(g,f)+D\height(\alpha)+ \log(D+1), 
\end{align*}
which proves~\eqref{ehfga}. 

For~\eqref{ehroot}  see \cite[Proposition~3.6(1)]{BB13}. Finally, we have 
$$
\frac{f^{(r)}}{r!}(x)=\sum_{k=r}^D\binom kr a_k x^{k-r}. 
$$
Since 
$$
\binom kr \le 2^k\le 2^D, 
$$
we have 
$$
\left|\frac{f^{(r)}}{r!}\right|_v \le 
\begin{cases}
2^D|f|_v, & v\mid \infty,\\
|f|_v, & v<\infty. 
\end{cases}
$$
Hence 
\begin{align*}
\height\left(1, \frac{f^{(r)}}{r!}\right) &= d^{-1}\sum_{v\in M_K}d_v\log^+\left|\frac{f^{(r)}}{r!}\right|_v\\
&\le d^{-1}\sum_{v\in M_K}d_v\log^+|f_v| +d^{-1}\sum_{\substack{v\in M_K\\v\mid \infty}}d_vD\log2\\
&= \height(1,f)+D\log2.  
\end{align*}
The lemma is proved.
\end{proof}

\begin{lemma}
\label{lhdivide}
Let ${f_1(x), \ldots, f_k(x)\in \bar \Q[x]}$ be non-zero polynomials of degrees not exceeding~$D$, and let ${g(x) \in \bar\Q[x]}$ be a common divisor of ${f_1, \ldots, f_k}$ (in the ring ${\bar\Q[x]}$). Then 
$$
\height(f_1/g, \ldots, f_k/g) \le \height(f_1, \ldots, f_k) +(D+k-1)\log2. 
$$
\end{lemma}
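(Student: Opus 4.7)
My plan is to package the $k$-tuple $(f_1, \ldots, f_k)$ into a single bivariate polynomial and then apply Gauss--Gelfond estimates for factorizations. Specifically, I would introduce
$$
F(x, y) := \sum_{i=1}^k f_i(x)\, y^{i-1}, \qquad H(x, y) := \sum_{i=1}^k (f_i/g)(x)\, y^{i-1}
$$
in $\bar\Q[x, y]$, so that $F(x,y) = g(x)\cdot H(x,y)$. Because the concatenated coefficient vector of $(f_1, \ldots, f_k)$ is exactly the coefficient vector of $F$ (and likewise for $H$), one has $\height(F) = \height(f_1, \ldots, f_k)$ and $\height(H) = \height(f_1/g, \ldots, f_k/g)$. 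The partial degrees of $F$ satisfy $\deg_x F \le D$ and $\deg_y F \le k - 1$.

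Next I would fix a number field $K$ containing all coefficients and work place by place via formula~\eqref{eheightold}. Writing $|P|_v$ for the maximum of the $v$-adic absolute values of the coefficients of a polynomial $P$, two local estimates drive the argument. At a non-archimedean $v$, Gauss's lemma applied in $K[x,y]$ yields the exact identity $|F|_v = |g|_v \cdot |H|_v$. At an archimedean $v$, the multivariate Gelfond--Mahler inequality (from multiplicativity of the Mahler measure together with the coefficient bound $|P|_v \le 2^{\deg_x P + \deg_y P} M_v(P)$) gives
$$
|g|_v \cdot |H|_v \;\le\; 2^{\deg_x F + \deg_y F}\, |F|_v \;\le\; 2^{D + k - 1}\, |F|_v.
$$

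I would then multiply each local inequality by $d_v/d$ and sum over $v \in M_K$: the non-archimedean contributions cancel exactly, while the archimedean ones collectively add at most $(D + k - 1)\log 2$, since the archimedean local degrees sum to $[K:\Q]$. This yields
$$
\height(g) + \height(H) \le \height(F) + (D + k - 1)\log 2,
$$
and since the projective height satisfies $\height(g) \ge 0$, dropping that term from the left side produces the lemma.

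I expect the main obstacle to be justifying the archimedean inequality with the clean constant $2^{D + k - 1}$. A naive combination of multivariate Mahler measure estimates with the passage between coefficient $L^\infty$- and $L^1$-norms introduces spurious factors of the form $\prod (1 + \deg_{x_i})$, which would inflate the stated constant. Obtaining the precise exponent requires either the sharp form of the multivariate Gelfond inequality (as in Bombieri--Gubler, \emph{Heights in Diophantine Geometry}, \S1.6) or a more careful local analysis that avoids converting between the two coefficient norms.
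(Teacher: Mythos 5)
Your proposal is correct and follows essentially the same route as the paper: both package the tuple into an auxiliary polynomial whose coefficient vector is the concatenation of those of $f_1,\dots,f_k$ (you use $F(x,y)=\sum_i f_i(x)y^{i-1}$ with $\deg_y F = k-1$; the paper uses $f(x,y_1,\dots,y_{k-1})=\sum_i f_i(x)y_i$ with each $\deg_{y_i}=1$, giving the same partial-degree sum $D+k-1$), apply the sharp Gelfond--Mahler factorization bound of Bombieri--Gubler (Theorem~1.6.13) to get $\height(g)+\height(F/g)\le\height(F)+(D+k-1)\log 2$, and drop $\height(g)\ge 0$. The caveat you raise about spurious $(1+\deg)$ factors is precisely why one must cite the sharp version in Bombieri--Gubler \S1.6 rather than the naive Mahler-measure argument, which is exactly what the paper does.
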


\begin{proof}
Consider the polynomial
$$
f(x,y_1, \ldots, y_{k-1}):=f_1(x)y_1+\cdots +f_{k-1}(x)y_{k-1}+ f_k(x) \in \bar\Q[x,y_1, \ldots, y_{k-1}]. 
$$
Applying Theorem~1.6.13 from \cite{BG06}, we obtain  
$$
\height(f/g) \le \height(f/g)+\height(g) \le \height(f) +(D+k-1)\log2.
$$
Since 
$$
\height(f_1/g, \ldots, f_k/g)=  \height(f/g), \qquad \height(f_1, \ldots, f_k)=\height(f), 
$$
the result follows. 
\end{proof}

\begin{lemma}
\label{lvals}
Let~$K$ be a number field of degree~$d$ and ${\alpha\in K}$. Then 
\begin{equation}
\label{evalsone}
\sum_{\nu_\gerp(\alpha)<0}\log\norm\gerp \le d\height(\alpha), \qquad \sum_{\nu_\gerp(\alpha)>0}\log\norm\gerp \le d\height(\alpha),
\end{equation}
where the first sum is over (finite) primes~$\gerp$ of~$K$ with ${\nu_\gerp(\alpha)<0}$, the second sum over those with ${\nu_\gerp(\alpha)>0}$, and in the second sum we assume ${\alpha\ne 0}$. 
More generally, let ${\alpha_1, \ldots, \alpha_k\in K}$. Then 
\begin{equation}
\label{evalsmany}
\sum_{\substack{\nu_\gerp(\alpha_i)<0\ \text{for} \\\text{some}\ i\in \{1,\ldots,k\}}}\log\norm\gerp \le d\height(\bar\alpha),
\qquad \bar\alpha=(1,\alpha_1, \ldots, \alpha_k). 
\end{equation}
\end{lemma}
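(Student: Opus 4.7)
The plan is to read off everything directly from the decomposition \eqref{eheightold} of the height into archimedean and non-archimedean contributions, applied to $\bar\alpha = (1,\alpha)$. Since $\nu_\gerp(1)=0$, that formula gives
\[
d\height(\alpha) = \sum_{\sigma} \log^{+}|\alpha^{\sigma}| + \sum_{\gerp} \max\{0,-\nu_\gerp(\alpha)\}\log\norm\gerp,
\]
and both sums are non-negative.

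For the first inequality in \eqref{evalsone} I would simply discard the archimedean part and restrict the prime sum to those $\gerp$ with $\nu_\gerp(\alpha)<0$. On such primes one has $-\nu_\gerp(\alpha)\ge 1$, so $\log\norm\gerp \le -\nu_\gerp(\alpha)\log\norm\gerp$, and summing gives
\[
\sum_{\nu_\gerp(\alpha)<0}\log\norm\gerp \;\le\; \sum_{\gerp}\max\{0,-\nu_\gerp(\alpha)\}\log\norm\gerp \;\le\; d\height(\alpha).
\]
For the second inequality in \eqref{evalsone}, the key observation is that $\height(\alpha^{-1}) = \height(\alpha)$ for $\alpha\neq 0$, which is immediate from the equality of \eqref{ehplus} and \eqref{ehmin} (or the product formula). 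Since $\nu_\gerp(\alpha^{-1}) = -\nu_\gerp(\alpha)$, the primes with $\nu_\gerp(\alpha)>0$ are exactly those with $\nu_\gerp(\alpha^{-1})<0$, so the bound follows by applying the first inequality to $\alpha^{-1}$.

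For the vector version \eqref{evalsmany}, the same reasoning works without modification. The expression \eqref{eheightold} for $\height(\bar\alpha)$ with $\bar\alpha=(1,\alpha_1,\ldots,\alpha_k)$ reads
\[
d\height(\bar\alpha) = \sum_{\sigma}\log\max\{1,|\alpha_1^{\sigma}|,\ldots,|\alpha_k^{\sigma}|\} + \sum_{\gerp}\max\{0,-\nu_\gerp(\alpha_1),\ldots,-\nu_\gerp(\alpha_k)\}\log\norm\gerp,
\]
and whenever $\nu_\gerp(\alpha_i)<0$ for at least one $i$, the maximum in the prime sum is $\ge 1$. Bounding $\log\norm\gerp$ by that maximum times $\log\norm\gerp$ and summing only over the relevant primes yields the claim. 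There is no real obstacle here; the only thing to watch out for is the passage $\alpha \leadsto \alpha^{-1}$ for the second half of \eqref{evalsone}, which is why the hypothesis $\alpha\neq 0$ is imposed there.
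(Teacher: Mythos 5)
Your proof is correct and follows exactly the paper's (much terser) approach: read off everything from the decomposition \eqref{eheightold} applied to $(1,\alpha_1,\ldots,\alpha_k)$, discard the archimedean part, and note that on the relevant primes the maximum is at least $1$; the second half of \eqref{evalsone} comes by replacing $\alpha$ with $\alpha^{-1}$. You have merely spelled out the details the paper compresses into ``immediate.''
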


\begin{proof}
Inequality~\eqref{evalsmany} is immediate from~\eqref{eheightold} (note that ${\alpha_0=1}$), and both statements in~\eqref{evalsone} are special cases of~\eqref{evalsmany}. 
\end{proof}

\begin{lemma}[``Liouville's inequality'']
\label{lliouv}
Let~$K$ and~$\alpha$ be as in Lemma~\ref{lvals}, ${\alpha \ne 0}$.  Let  ${S\subset M_K}$ be any set of places of~$K$ (finite or infinite). Then 
$$
e^{-d\height(\alpha)}\le \prod_{v\in M_K}|\alpha|_v^{d_v}\le e^{d\height(\alpha)}. 
$$
In particular, if ${\sigma_1, \ldots \sigma_r:K\hookrightarrow\C}$ are some distinct complex embeddings of~$K$ then 
$$
\prod_{i=1}^r |\alpha^{\sigma_i}|\ge e^{-d\height(\alpha)}. 
$$
\end{lemma}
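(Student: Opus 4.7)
My plan is to reduce the bilateral inequality directly to the two formulas~\eqref{ehplus} and~\eqref{ehmin} for the height, the first expressing $d\height(\alpha)$ as $\sum_v d_v\log^+|\alpha|_v$ and the second as $-\sum_v d_v\log^-|\alpha|_v$. The key observation is the monotonicity of the factors: a local factor $|\alpha|_v^{d_v}$ is at least $1$ when $|\alpha|_v\ge 1$ and at most $1$ when $|\alpha|_v\le 1$, so one can freely drop factors of the first kind or the second kind, and freely extend $S$ to $M_K$ in either direction.

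For the upper bound, I would start from $\prod_{v\in S}|\alpha|_v^{d_v}$, discard the factors with $|\alpha|_v<1$ (each at most $1$, so the product only grows), and then enlarge the index set by passing to $\prod_{v\in M_K}\max(1,|\alpha|_v)^{d_v}$; by~\eqref{ehplus} this equals $e^{d\height(\alpha)}$. The lower bound is obtained by the mirror argument against~\eqref{ehmin}: discard factors with $|\alpha|_v>1$, extend to all of $M_K$, and recognise the result as $e^{-d\height(\alpha)}$. Equivalently, one could apply the upper bound to $\alpha^{-1}$ on $M_K\setminus S$ and combine it with the product formula and the invariance $\height(\alpha)=\height(\alpha^{-1})$.

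For the \emph{in particular} clause, I would identify each embedding $\sigma_i$ with the archimedean place $v_i$ it induces. Setting $\tilde S=\{v_1,\ldots,v_r\}$ (without repetition) and $n_v=|\{i:v_i=v\}|$, one has $1\le n_v\le d_v$ and $\prod_{i=1}^r|\alpha^{\sigma_i}|=\prod_{v\in\tilde S}|\alpha|_v^{n_v}$. On each $v$ with $|\alpha|_v<1$ the inequality $n_v\le d_v$ gives $|\alpha|_v^{n_v}\ge|\alpha|_v^{d_v}$, while factors with $|\alpha|_v\ge 1$ are at least~$1$; enlarging to all places $v\in M_K$ with $|\alpha|_v\le 1$ only decreases the product, and~\eqref{ehmin} identifies the result as $e^{-d\height(\alpha)}$.

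No substantive obstacle is anticipated: the whole argument is bookkeeping against the definitions. The only mildly delicate point is keeping straight, in the last step, that a complex archimedean place has local degree $2$ and lies under a pair of conjugate embeddings, so that $n_v$ may legitimately take the values $1$ or $2$ at such a place without disturbing the bound.
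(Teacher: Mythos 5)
The paper omits the proof of this lemma (``well-known and easy''), so there is nothing in the source to compare against directly. Your argument is correct and is the standard one: both halves of the bilateral inequality are read off from the two expressions~\eqref{ehplus} and~\eqref{ehmin} for $d\height(\alpha)$, using only the trivial monotonicity of the local factors under dropping terms and enlarging the index set. You also correctly recognised that the displayed product must be taken over $S$ rather than over all of $M_K$ (over $M_K$ the product formula would force it to equal~$1$, and the statement would be vacuous), and your treatment of the ``in particular'' clause is sound: writing $\prod_{i=1}^r|\alpha^{\sigma_i}|=\prod_{v\in\tilde S}|\alpha|_v^{n_v}$ with $1\le n_v\le d_v$, the only factors that can hurt are those with $|\alpha|_v<1$, and there $n_v\le d_v$ gives $|\alpha|_v^{n_v}\ge|\alpha|_v^{d_v}$, so one is reduced to the lower bound already proved. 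No gaps.
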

We omit the proof, which is well-known and easy.

\section{Cyclotomic polynomials}
\label{scyclo}

We denote ${\Phi_m(T)}$ the $m$th cyclotomic polynomial. We will systematically use the identity
\begin{equation}
\label{ecyclomu}
\Phi_m(T) = \prod_{d\mid m}(T^d-1)^{\mu(m/d)},
\end{equation}

In this section we study values of cyclotomic polynomials at algebraic points. We give an asymptotic expression for the height of $\Phi_m(\gamma)$ as ${\gamma \in \bar\Q}$ is fixed and ${m\to\infty}$. We also estimate the absolute value of $\Phi_m(\gamma)$ from below. 

The results of this section can be viewed as totally explicit versions of some results from \cite[Section~3]{BBL13}, and we follow~\cite{BBL13} rather closely. We note however that all this goes back to the 1974 work of Schinzel~\cite{Sc74} or even earlier.

\subsection{The height}

\begin{theorem}
\label{tas}
Let~$\gamma$ be an algebraic number. Then
$$
\height(\Phi_m(\gamma))=\ph(m)\height(\gamma)+O_1\bigl(2^{\omega(m)}\log (\pi m)\bigr).
$$
\end{theorem}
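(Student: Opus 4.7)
The plan is to extract the main term by using the factorization identity \eqref{ecyclomu}, which at every place~$v$ of a number field $K\ni\gamma$ yields
$$
\log|\Phi_m(\gamma)|_v=\sum_{d\mid m}\mu(m/d)\log|\gamma^d-1|_v.
$$
Combined with the classical identity $\sum_{d\mid m}\mu(m/d)d=\varphi(m)$, this suggests writing
$$
\log|\Phi_m(\gamma)|_v=\varphi(m)\log^+|\gamma|_v+E_v,\qquad E_v:=\sum_{d\mid m}\mu(m/d)L_v(d),
$$
where $L_v(d):=\log|\gamma^d-1|_v-d\log^+|\gamma|_v$ measures the deviation of each factor from its ``generic'' value. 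Applying the elementary inequality $|\max(A+B,0)-A|\le|B|$ (for $A\ge0$) place by place and summing against the weights $d_v/[K:\Q]$ reduces the theorem to
$$
\frac{1}{[K:\Q]}\sum_v d_v|E_v|=O_1\bigl(2^{\omega(m)}\log(\pi m)\bigr).
$$

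The first step is to dispose of the finite places. At a prime $\gerp$ with $|\gamma|_\gerp\neq 1$, the ultrametric identity $|\gamma^d-1|_\gerp=\max\{|\gamma|_\gerp,1\}^d$ forces $L_\gerp(d)=0$ for every~$d$, hence $E_\gerp=0$. At $\gerp$ with $|\gamma|_\gerp=1$ one has $E_\gerp=\log|\Phi_m(\gamma)|_\gerp\le 0$, and the total contribution is controlled through the primitive-divisor framework from Section~\ref{sprim}: grouping such $\gerp$ by the multiplicative order of $\gamma\bmod\gerp$ among the divisors of $m$ gives $O(\log n)$ per order~$n$, and summing over the $2^{\omega(m)}$ divisors in the Möbius support yields the required non-archimedean bound.

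For archimedean~$v$, I would use the palindromic identity $\Phi_m(T)=T^{\varphi(m)}\Phi_m(1/T)$ (for $m\ge2$) to rewrite $E_v=\log|\Phi_m(\beta)|_v$ for $\beta\in\{\gamma_\sigma,\gamma_\sigma^{-1}\}$ chosen so that $|\beta|_v\le 1$. The trivial upper bound $E_v\le\varphi(m)\log 2$ coming from $|\beta-\zeta|\le 2$ is far too coarse, so I would return to the Möbius expansion $E_v=\sum_{d\mid m}\mu(m/d)L_v(d)$ and split each $L_v(d)$ into its positive and negative parts. The positive part of $L_v(d)$ never exceeds $\log 2$ at any archimedean place, contributing at most $2^{\omega(m)}\log 2$ across the Möbius support. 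The negative part is controlled using Liouville's inequality (Lemma~\ref{lliouv}) applied to $\Phi_m(\beta)$, whose height can be bounded recursively via Lemma~\ref{lhpol}; the factor $\log(\pi m)$ emerges from the recursive degree and coefficient bounds.

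The hard part will be the archimedean analysis when $|\gamma_\sigma|$ is very close to~$1$ but $\gamma$ is not itself a root of unity. In that regime individual $L_v(d)$ can be arbitrarily negative, so a term-by-term bound of $|E_v|$ by $\sum|L_v(d)|$ destroys all cancellation. The decisive ingredient is to keep the signed structure of the Möbius sum and exploit that $\Phi_m(\beta)$ itself admits a global Liouville lower bound; pairing positive and negative $\mu(m/d)$ contributions will be what reduces the bound from the naively obtained $\varphi(m)\log 2$ down to the required $O(2^{\omega(m)}\log(\pi m))$.
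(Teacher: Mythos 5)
The outline follows the paper's general structure (split by places, use the Möbius expansion and the palindromic identity), but there are two genuine gaps, one at the finite places and one — the serious one — at the archimedean places.

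At the finite places, your reduction to bounding $\sum_v d_v|E_v|$ is far too lossy. When $|\gamma|_\gerp=1$, the quantity $|E_\gerp|=-\log|\Phi_m(\gamma)|_\gerp$ is not small in general: for instance if $\gamma$ is a rational integer and $m=1$, then $\sum_\gerp d_\gerp|E_\gerp|=\log|\gamma-1|$, which is nowhere near $2^{\omega(m)}\log(\pi m)$. The primitive-divisor machinery of Section~\ref{sprim} cannot save this, because Proposition~\ref{pup} requires the hypothesis that $\gamma^m-1$ has no primitive divisor, which you don't have. The fix is to abandon the bound on $|E_\gerp|$ entirely: since $\Phi_m$ has integer coefficients, for every finite place one has the exact identity $\log^+|\Phi_m(\gamma)|_\gerp=\ph(m)\log^+|\gamma|_\gerp$ (if $|\gamma|_\gerp>1$ the leading term dominates ultrametrically; if $|\gamma|_\gerp\le1$ both sides are $0$), so the finite places contribute nothing to the difference. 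This is what the paper observes is ``obvious.''

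The more serious gap is the archimedean upper bound. You argue that the positive part of each $L_v(d)$ is at most $\log 2$, and hope that keeping the signed Möbius structure tames the negative parts. But this does not give an upper bound for $\log|\Phi_m(z)|$ on the closed unit disc: when $\mu(m/d)=-1$ and $|z^d-1|<1$, that term contributes $-\log|z^d-1|>0$ to $\log|\Phi_m(z)|$, and such a term can be arbitrarily large as $z$ approaches a root of unity of order dividing $d$. ``Pairing up'' the signs without more information does not control this. The paper's actual resolution (Proposition~\ref{pcyc}) is to write $z=\zeta e^{2\pi i\theta/m}$ with $\zeta$ of exact order $\ell\mid m$, then split the Möbius sum into divisors $d$ with $\ell\nmid d$ (each contributing $O(\log(m/d))$, with the right sign) and $d$ with $\ell\mid d$, where the singular part $-\log(m/(2\ell\theta))$ appears with the same sign in every such term and therefore telescopes to zero unless $\ell=m$. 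That is the key cancellation, and it is not visible without introducing $\ell$ and $\theta$. Finally, your suggestion to use Liouville's inequality for the single-place lower bound cannot work: Liouville would inject a $\height(\gamma)$-dependence, whereas the claimed error term $2^{\omega(m)}\log(\pi m)$ is independent of $\height(\gamma)$. The paper instead proves the elementary pointwise lower bound $\log|\Phi_m(z)|\ge-2^{\omega(m)}\log(1/\eps)$ on the disc $|z|\le1-\eps$ (using only $|z^d-1|\ge\eps$) and applies it with $\eps=1/m$, after invoking the palindromic identity to handle $|z|>1$.
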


Recall that ${A=O_1(B)}$ means that ${|A|\le B}$.

To prove this theorem we need some preparations. We follow \cite[Section~3]{BBL13} with some changes. 


\begin{proposition}
\label{pcyc}
For a positive integer~$m$ we have 
\begin{equation}
\label{ephinz}
\max_{|z|\le1}\log|\Phi_m(z)|\le 2^{\omega(m)}\log (\pi m),
\end{equation}
the maximum being over the unit disc on the complex plane. (We use the convention ${\log0=-\infty}$.) For ${0<\eps\le 1/2}$ we also have 
\begin{equation}
\label{ephitriv}
\min_{|z|\le1-\eps}\log|\Phi_m(z)|\ge -2^{\omega(m)}\log  \frac1\eps. 
\end{equation}
\end{proposition}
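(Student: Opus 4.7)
Both inequalities will follow from the Möbius identity~\eqref{ecyclomu}, in which exactly $2^{\omega(m)}$ divisors contribute nonzero exponents, split equally between $\mu(m/d) = \pm 1$ for $m > 1$. My first step is to reduce to the case where $m = p_1 \cdots p_k$ is squarefree. Writing $m_0 = \mathrm{rad}(m)$, the standard identity $\Phi_m(z) = \Phi_{m_0}(z^{m/m_0})$, combined with the facts that $|z| \le 1$ implies $|z^{m/m_0}| \le 1$ and $|z| \le 1-\eps$ implies $|z^{m/m_0}| \le (1-\eps)^{m/m_0} \le 1-\eps$ (using $(1-\eps)^n \le 1-\eps$ for $n \ge 1$), reduces both bounds for $m$ to those for $m_0$; since $\omega(m) = \omega(m_0)$ and $m \ge m_0$, the quantitative bounds transfer.

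Assume now $m = p_1 \cdots p_k$ is squarefree, and set $m_S = m/\prod_{i \in S} p_i$ for $S \subseteq \{1,\ldots,k\}$. The key pointwise factor estimates are $|z^{m_S}-1| \le 2$ on $|z| \le 1$, and $|z^{m_S}-1| \ge 1-(1-\eps)^{m_S} \ge \eps$ on $|z| \le 1-\eps$. For the lower bound, applying the Möbius identity pointwise on $|z| \le 1-\eps$, the $2^{k-1}$ even-$|S|$ log-terms contribute at least $\log\eps$ each, and the $2^{k-1}$ odd-$|S|$ log-terms (appearing with a minus sign) contribute at least $-\log 2$ each. Summing,
\[
\log|\Phi_m(z)| \ge 2^{k-1}\log\eps - 2^{k-1}\log 2 = 2^{k-1}\log(\eps/2),
\]
and an elementary check shows $2^{k-1}\log(\eps/2) \ge 2^k\log\eps$ precisely when $\eps \le 1/2$, yielding the desired $\log|\Phi_m(z)| \ge -2^{\omega(m)}\log(1/\eps)$.

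The upper bound on $|z| \le 1$ is more delicate, since $-\log|z^{m_S}-1|$ (for odd $|S|$) is unbounded above as $z^{m_S} \to 1$ on the unit circle. I would proceed in two steps. First, the same signed-sum argument on $|z| \le 1-\eps$ yields $\log|\Phi_m(z)| \le 2^{k-1}\log(2/\eps)$. Second, to extend to $|z| \le 1$, I invoke Bernstein's inequality $\|\Phi_m'\|_\infty \le \ph(m)\|\Phi_m\|_\infty$ on the closed unit disk: writing $M = \max_{|z| \le 1}|\Phi_m|$ and $M_\eps = \max_{|z| \le 1-\eps}|\Phi_m|$, a Taylor expansion between $z^* \in \{|z|=1\}$ and $z' = (1-\eps)z^*$ gives $M \le M_\eps + \eps\ph(m)M$, whence $M(1-\eps\ph(m)) \le M_\eps$. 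Choosing $\eps = 1/(2\ph(m))$ forces $M \le 2M_\eps$, so $\log|\Phi_m(z)| \le 2^{k-1}\log(4\ph(m)) + \log 2$; a short computation using $\ph(m) \le m$ and $\pi^2/4 \cdot m \ge 2^{1/2^{k-1}}$ for $m \ge 2$ confirms that this is bounded by $2^{\omega(m)}\log(\pi m)$, with the case $m=1$ being trivial.

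The main obstacle lies in the Bernstein step, which seemingly is circular because it bounds $|\Phi_m'|$ via the very quantity $M$ we aim to estimate. The resolution is that the derived inequality $M(1-\eps\ph(m)) \le M_\eps$ is linear in $M$ and may be inverted provided $\eps\ph(m) < 1$, which is precisely why the choice $\eps\ph(m) = 1/2$ delivers a clean bound with a controlled overhead factor.
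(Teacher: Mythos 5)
Your proof is correct, and for the upper bound~\eqref{ephinz} it takes a genuinely different route from the paper. The paper reduces to $|z|=1$ by the maximum principle and then performs a delicate case analysis directly on the circle: it writes $z=\zeta e^{2\pi i\theta/m}$ with $\zeta$ an $m$th root of unity and $|\theta|\le 1/2$, and for each divisor $d\mid m$ estimates $\log|z^d-1|$ differently according to whether $\ell\mid d$ or not (where $\ell$ is the order of $\zeta$); cancellations in the Möbius sum then keep the total under control. You instead observe that all factors $|z^d-1|$ lie in $[\eps,2]$ on the shrunk disc $|z|\le 1-\eps$, obtain a clean bound $2^{\omega(m)-1}\log(2/\eps)$ there, and then bridge to the boundary via Bernstein's inequality $\|\Phi_m'\|_\infty\le\ph(m)\|\Phi_m\|_\infty$ with the well-tuned choice $\eps=1/(2\ph(m))$, which costs only a factor $2$. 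The paper's approach is entirely elementary but intricate; yours invokes a deeper tool but is conceptually streamlined, and both land inside the stated constant $2^{\omega(m)}\log(\pi m)$. Your preliminary reduction to squarefree $m$ via $\Phi_m(z)=\Phi_{m_0}(z^{m/m_0})$ is valid but not needed by the paper, which simply notes that the Möbius sum has $2^{\omega(m)}$ nonzero terms regardless. For the lower bound~\eqref{ephitriv} the two arguments are essentially the same observation: the paper uses $\bigl|\log|z^d-1|\bigr|\le\log(1/\eps)$ and the triangle inequality, whereas you split by the sign of $\mu$ and obtain the marginally sharper intermediate bound $2^{\omega(m)-1}\log(\eps/2)$ before relaxing it; both rely on $\eps\le 1/2$ in the same way.
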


\begin{proof}
By the maximum principle, it suffices to prove that~\eqref{ephinz} holds for complex~$z$ with ${|z|=1}$. Thus, fix such~$z$. We will actually prove a slightly sharper bound 
\begin{equation}
\label{ecircle}
\log|\Phi_m(z)|\le (2^{\omega(m)-1}+1)\log m+2^{\omega(m)}\log\pi. 
\end{equation}  
We can write~$z$ in a unique way as ${z=\zeta e^{2\pi i\theta/m}}$, where~$\zeta$ is an $m$th root of unity (not necessarily primitive) and ${-1/2<\theta\le 1/2}$. We may assume ${\theta\ne 0}$, because for the finitely many~$z$ with ${\theta=0}$ the bound extends by continuity.

Let~$\ell$ be the exact order of~$\zeta$; thus, ${\ell\mid m}$  and~$\zeta$  is a primitive $\ell$th root of unity. Let~$d$ be any other divisor of~$m$. If ${\ell\nmid d}$ then ${d\le m/2}$ and
$$
2\ge |z^d-1| \ge 2\sin(\pi d/2m)\ge 2d/m.
$$ 
(We use the inequality ${|\sin x|\ge (2/\pi)x}$ which holds for ${|x|\le\pi/2}$.)  This implies that 
\begin{equation}
\label{elndivd}
 \bigl|\log |z^d-1| \bigr|\le\log (m/d).  
\end{equation}
And if ${\ell\mid d}$ then we have
${|z^m-1| = 2\sin(\pi  \theta d/m)}$, which implies that
$$
2\pi\theta d/m \ge |z^d-1|\ge 4\theta d/m. 
$$
Writing   ${d=d'\ell}$,
this implies that 
\begin{equation}
\label{eldivd}
\log |z^{d'\ell}-1| = \log d'-\log \frac m{2\ell\theta}+O_1\left(\log\pi\right).  
\end{equation}
Using~\eqref{ecyclomu} we obtain
\begin{align*}
\log|\Phi_m(z)|
&=  \sum_{\genfrac{}{}{0pt}{}{d\mid m}{\ell\nmid d}} \mu\left(\frac md\right) \log |z^d-1|+  \sum_{d'\mid m/\ell}\mu\left(\frac{m/\ell}{d'}\right) \log |z^{\ell d'}-1|\\
&\le \sum_{d\mid m} \left|\mu\left(\frac md\right)\right| \log \frac md+ \sum_{d'\mid m/\ell}\mu\left(\frac{m/\ell}{d'}\right) \left(\log d'-\log \frac m{2\ell\theta}\right)\\
&\hphantom{\le{}}+O_1(2^{\omega(n/\ell)}\log\pi)\\
&=2^{\omega(m)-1}\sum_{p\mid m}\log p +\Lambda\left(\frac m\ell\right)   +\delta\log(2\theta)+ O_1(2^{\omega(m/\ell)}\log\pi), 
\end{align*}
where   ${\delta=0}$ if ${\ell<m}$ and ${\delta=1}$ if ${\ell=m}$. Since ${\log(2\theta)\le 0}$, 
this proves~\eqref{ecircle}.

The proof of~\eqref{ephitriv} is much easier. 
When ${|z|\le 1-\eps}$, we have 
$$
2\ge |z^d-1|\ge 1-|z|^d\ge 1-|z|\ge \eps. 
$$
Since ${0<\eps\le 1/2}$ this implies that ${\bigl|\log|z^d-1|\bigr|\le \log (1/\eps)}$. We obtain 
$$
\bigl|\log |\Phi_m(z)|\bigr|=\left|\sum_{d\mid m}\mu\left(\frac md\right) \log|z^d-1|\right| \le 2^{\omega(m)}\log \frac1\eps.
$$
In particular,~\eqref{ephitriv} holds.  
\end{proof}

\begin{corollary}
\label{ccyc}
Let~$m$ be a positive integer and ${z\in \C}$. Then 
$$
\log^+|\Phi_m(z)|= \ph(m)\log^+|z|+O_1\bigl(2^{\omega(m)}\log (\pi m)\bigr),
$$
where ${\log^+=\max\{\log, 0\}}$. 
\end{corollary}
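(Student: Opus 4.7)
The plan is to split based on whether $|z|\le 1$ or $|z|>1$, exploiting the self-reciprocity of cyclotomic polynomials in the second case. For $|z|\le 1$ we have $\ph(m)\log^+|z|=0$ and $\log^+|\Phi_m(z)|\ge 0$ trivially, so the claim reduces to $\log^+|\Phi_m(z)|\le 2^{\omega(m)}\log(\pi m)$, which is precisely the first inequality of Proposition~\ref{pcyc}.

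For $|z|>1$ the main tool is the identity $|\Phi_m(z)|=|z|^{\ph(m)}|\Phi_m(1/z)|$, valid for every $m\ge 1$: for $m\ge 2$ one has $\Phi_m(0)=1$, whence the usual self-reciprocity $\Phi_m(z)=z^{\ph(m)}\Phi_m(1/z)$, and for $m=1$ a direct check gives $\Phi_1(z)=-z\,\Phi_1(1/z)$. Setting $w=1/z$ with $|w|<1$ rewrites the identity as $\log|\Phi_m(z)|-\ph(m)\log|z|=\log|\Phi_m(w)|$. Feeding Proposition~\ref{pcyc}'s upper bound into the right-hand side immediately gives the upper half of the claim, namely $\log^+|\Phi_m(z)|\le \ph(m)\log|z|+2^{\omega(m)}\log(\pi m)$.

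The subtler half is the matching lower bound $\log^+|\Phi_m(z)|\ge \ph(m)\log|z|-2^{\omega(m)}\log(\pi m)$. Proposition~\ref{pcyc}'s lower bound requires $|w|\le 1-\eps$ and degenerates as $|w|\to 1^-$, i.e.\ as $|z|\to 1^+$, so I would split once more. If $|z|\ge \pi m/(\pi m-1)$, then $|w|\le 1-1/(\pi m)$ with $1/(\pi m)\le 1/2$, and Proposition~\ref{pcyc} applied with $\eps=1/(\pi m)$ yields $\log|\Phi_m(w)|\ge -2^{\omega(m)}\log(\pi m)$; combined with the functional equation and $\log^+\ge \log$ this finishes the range. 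If instead $1<|z|<\pi m/(\pi m-1)$, Proposition~\ref{pcyc}'s lower estimate is too weak, but the target inequality becomes vacuous: from $-\log(1-1/(\pi m))\le 1/(\pi m-1)$ and $\ph(m)\le m$ one obtains $\ph(m)\log|z|<m/(\pi m-1)\le 1/(\pi-1)<\log\pi\le 2^{\omega(m)}\log(\pi m)$, so $\log^+|\Phi_m(z)|\ge 0\ge \ph(m)\log|z|-2^{\omega(m)}\log(\pi m)$ holds automatically.

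The only mildly tricky point is this last sub-case, where the lower bound of Proposition~\ref{pcyc} is useless; the rescue is that the offending annulus $1<|z|<1+1/(\pi m-1)$ is thin enough to keep $\ph(m)\log|z|$ bounded by a universal constant, rendering the inequality trivial there.
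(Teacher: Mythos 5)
Your proof is correct and follows essentially the same route as the paper's: reduce to $|z|\le 1$ via the functional equation $|\Phi_m(z)|=|z|^{\ph(m)}|\Phi_m(1/z)|$, feed in the two estimates of Proposition~\ref{pcyc}, and dispatch the thin annulus near $|z|=1$ (where the lower estimate degenerates) by noting the target inequality is automatic there. The only cosmetic difference is that you take $\eps=1/(\pi m)$ and split at $|z|=\pi m/(\pi m-1)$, while the paper takes $\eps=1/m$ and splits at $m/(m-1)$ (and treats $m=1$ as trivially true rather than running it through the general scheme); both choices work.
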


\begin{proof}
For ${|z|\le 1}$ this is  Proposition~\ref{pcyc}. If ${ |z|>1}$ then
\begin{equation}
\label{ephire}
\log|\Phi_m(z)|=\ph(m)\log|z|+\log|\Phi_m(z^{-1})|, 
\end{equation}
and ${\log|\Phi_m(z^{-1})|\le 2^{\omega(m)}\log (\pi m)}$ by Proposition~\ref{pcyc}. This already implies the upper bound 
$$
\log^+|\Phi_m(z)|\le  \ph(m)\log^+|z|+2^{\omega(m)}\log (\pi m). 
$$
The lower bound 
\begin{equation}
\label{ephilo}
\log^+|\Phi_m(z)|\ge  \ph(m)\log^+|z|-2^{\omega(m)}\log (\pi m)
\end{equation}
is trivial when ${m=1}$, so we will assume ${m\ge 2}$ in the sequel. In the case ${1<|z|\le m/(m-1)}$ we have 
$$
\log^+|\Phi_m(z)| \ge 0 \ge \ph(m)\log\frac{m}{m-1}-1\ge \ph(m)\log^+|z|-1,
$$
which is much better than wanted. Finally, if ${|z|\ge m/(m-1)}$, then 
$$
\log|\Phi_m(z^{-1})|\ge -2^{\omega(m)}\log m
$$ 
by~\eqref{ephitriv} with ${\eps=1/m}$. Hence~\eqref{ephilo} follows from~\eqref{ephire} in this case.   
\end{proof}

\paragraph{Proof of Theorem~\ref{tas}.}
We  use~\eqref{ehplus} with ${\alpha=\Phi_m(\gamma)}$. For ${v\in M_K}$ we have 
$$
\log^+|\Phi_m(\gamma)|_v=
\begin{cases}
\ph(m)\log^+|\gamma|_v+O_1\bigl(2^{\omega(m)}\log (\pi m)\bigr), & v\mid \infty,\\
\ph(m)\log^+|\gamma|_v, & v<\infty.
\end{cases}
$$
Indeed, the archimedean case is Corollary~\ref{ccyc}, and the non-archimedean case is obvious. Summing up, the result follows. 
\qed

\subsection{The lower bound}

The following result is proved in \cite[Corollary~4.2]{BL20} as a consequence of Baker's theory of logarithmic forms. 

\begin{proposition}
\label{pabs}
Let~$\gamma$ be a complex algebraic number of degree~$d$, not a root of unity, and~$n$ a positive integer. Then 
\begin{equation*}
|\gamma^n-1|\ge 
e^{-10^{12}d^4(\height(\gamma)+1)\log (n+1)}. 
\end{equation*}
\end{proposition}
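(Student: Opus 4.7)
The plan is to derive this from a totally explicit lower bound for linear forms in two complex logarithms, such as Matveev's theorem or the Laurent--Mignotte--Nesterenko estimates; this is the standard route to inequalities of the shape $|\gamma^n-1|\ge e^{-\kappa\log n}$. First, assume $|\gamma^n-1|\le 1/2$, else the bound is trivial. Choose a branch of $\log\gamma$ with $|\Im\log\gamma|\le\pi$ together with the unique integer $k$ for which
$$
\Lambda:=n\log\gamma-2\pi ik=n\log\gamma-2k\log(-1)
$$
satisfies $|\Lambda|\le\log 2$. Then $\Lambda\ne 0$, since $\gamma$ is not a root of unity, and the elementary inequality $|e^z-1|\ge |z|/2$ for $|z|\le\log 2$ yields $|\gamma^n-1|\ge|\Lambda|/2$. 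It therefore suffices to bound $|\Lambda|$ from below.

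Next, I would control the coefficients of $\Lambda$. Liouville's inequality (Lemma~\ref{lliouv}), applied to $\gamma$ and to $1/\gamma$, gives $\bigl|\log|\gamma|\bigr|\le d\,\height(\gamma)$, hence $|\log\gamma|\le d\,\height(\gamma)+\pi$; combined with $|\Lambda|\le\log 2$ this yields $|k|\le n(d\,\height(\gamma)+\pi)$, and so $B:=\max(n,2|k|)\le C_0\, n\, d(\height(\gamma)+1)$ for an absolute constant $C_0$. Apply Matveev's theorem to the non-zero linear form $\Lambda$ in the logarithms of the algebraic numbers $\gamma$ and $-1$ (both contained in a number field of degree at most $2d$ over $\Q$) to obtain an estimate of the shape
$$
\log|\Lambda|\ge -C_1\,d^2\cdot A_1A_2\cdot\log(eB),
$$
where $A_1\le C_2\,d(\height(\gamma)+1)$ and $A_2$ is absolute (since $\height(-1)=0$). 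Substituting the bound on $B$, one arrives at
$$
\log|\Lambda|\ge -C_3\,d^3(\height(\gamma)+1)\bigl(\log(n+1)+\log d+\log(\height(\gamma)+1)\bigr).
$$

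The main obstacle is purely cosmetic: one must absorb the parasitic $\log d$ and $\log(\height(\gamma)+1)$ terms into the clean shape $d^4(\height(\gamma)+1)\log(n+1)$ claimed in the proposition. A short case split suffices. When $n+1$ exceeds both $d$ and $\height(\gamma)+1$, both extra logarithms are dominated by $\log(n+1)$ and one loses only a constant factor. In the complementary range, where $n$ is small compared with $d$ or $\height(\gamma)+1$, the crude bounds $\log d\le d$ and $\log(\height(\gamma)+1)\le\height(\gamma)+1$ convert the extra logarithms into an additional factor $d(\height(\gamma)+1)$, upgrading $d^3$ to $d^4$. The very generous constant $10^{12}$ then comfortably swallows all the numerical constants coming from Matveev's theorem and from this bookkeeping.
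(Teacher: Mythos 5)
The paper does not prove this statement; it cites it from \cite[Corollary~4.2]{BL20} as a consequence of Baker's theory of logarithmic forms, which is exactly the route you take (a Matveev-type lower bound applied to $\Lambda=n\log\gamma-2\pi ik$). Your reductions are sound: the branch choice so that $|\Lambda|\le\log 2$, the fact that $\Lambda\ne 0$ because $\gamma$ is not a root of unity, the elementary inequality $|e^z-1|\ge|z|/2$ on $|z|\le\log 2$, and the bound $|k|\ll n\,d\,(\height(\gamma)+1)$.

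The gap is in the final case split, and it is not cosmetic. From Matveev, with $B\ll n\,d\,(\height(\gamma)+1)$, you obtain a lower bound of the shape $-C\,d^3(\height(\gamma)+1)\bigl(\log(n+1)+\log d+\log(\height(\gamma)+1)\bigr)$, and you must compare this with $-10^{12}d^4(\height(\gamma)+1)\log(n+1)$. In the range $n+1<\height(\gamma)+1$ you replace $\log(\height(\gamma)+1)$ by $\height(\gamma)+1$ and claim this merely ``upgrades $d^3$ to $d^4$.'' It does not: the replacement introduces a factor $\asymp\height(\gamma)+1$ (not a factor $d$), giving a bound of the form $-C\,d^4(\height(\gamma)+1)^2$; and precisely in that range one has $\height(\gamma)+1>\log(n+1)$, so this is \emph{strictly weaker} than the target $-10^{12}d^4(\height(\gamma)+1)\log(n+1)$. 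For instance, with $d=1$, $n=1$, $\height(\gamma)$ huge, the Proposition asks for $-10^{12}(\height(\gamma)+1)\log 2$, but your argument only yields $-C(\height(\gamma)+1)^2$, which is much smaller. (The Proposition is still true there, but by Liouville's inequality, Lemma~\ref{lliouv}: $|\gamma^n-1|\ge e^{-d(n\height(\gamma)+\log 2)}$, which for small $n$ is far stronger --- a different argument from the one you wrote.) To close the proof you must genuinely control the parasitic $\log d$ and $\log(\height(\gamma)+1)$ contributions to $\log B$: for example, dispose of the small-$n$ range by Liouville, and for larger $n$ use a two-logarithm estimate with a normalized coefficient parameter (as in Laurent--Mignotte--Nesterenko's $b'$). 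As written, the ``crude bound'' step changes the shape of the inequality and the argument does not close.
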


\begin{corollary}
\label{carch}
Let~$\gamma$ and~$m$ be as in Proposition~\ref{pabs}. Then 
\begin{equation}
\label{elowerreal}
\log |\Phi_m(\gamma)|\ge -10^{12}d^4(\height(\gamma)+1)\cdot 2^{\omega(m)}\log (m+1).
\end{equation}
\end{corollary}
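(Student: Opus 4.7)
\medskip
\noindent
\textbf{Proof sketch.} The plan is to exploit the Möbius formula~\eqref{ecyclomu} to write
$$
\log|\Phi_m(\gamma)| = \sum_{k\mid m}\mu(m/k)\log|\gamma^k-1|,
$$
a sum in which only the squarefree divisors contribute, giving at most $2^{\omega(m)}$ nonzero terms (and exactly $2^{\omega(m)-1}$ terms of each sign when $m\ge 2$). Since $\gamma$ is not a root of unity, every $\gamma^k-1$ is nonzero, and Proposition~\ref{pabs} provides the lower bound
$$
\log|\gamma^k-1|\ge -10^{12}d^4(\height(\gamma)+1)\log(k+1)
$$
for each $k\mid m$.

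First I would handle the case $|\gamma|\le 1$. Here the terms with $\mu(m/k)=+1$ are bounded below directly by Proposition~\ref{pabs} (and $\log(k+1)\le\log(m+1)$), while for the terms with $\mu(m/k)=-1$ we need an upper bound on $\log|\gamma^k-1|$; but in this regime $|\gamma^k-1|\le 2$, so $\log|\gamma^k-1|\le\log 2$. Summing over the at most $2^{\omega(m)-1}$ nonzero terms of each sign and absorbing the $\log 2$ contributions into the constant yields the desired inequality.

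The case $|\gamma|>1$ is where the direct approach breaks down, because then $|\gamma^k-1|$ can be as large as $|\gamma|^k$, producing a term linear in $m$ rather than logarithmic — this is the main obstacle. The remedy is the reciprocal identity
$$
\Phi_m(T)=T^{\ph(m)}\Phi_m(1/T)\qquad(m\ge 2),
$$
which gives $\log|\Phi_m(\gamma)|=\ph(m)\log|\gamma|+\log|\Phi_m(1/\gamma)|\ge \log|\Phi_m(1/\gamma)|$ since $|\gamma|>1$. Now $1/\gamma$ has the same degree and the same height as $\gamma$, and is again not a root of unity, so the first case applied to $1/\gamma$ gives the required bound. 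The case $m=1$ is handled separately (directly from Proposition~\ref{pabs} with $n=1$), so in the end the three cases together yield~\eqref{elowerreal}.
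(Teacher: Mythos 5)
Your proof is correct and follows essentially the same route as the paper: reduce to $|\gamma|\le 1$ via the reciprocal identity, expand $\log|\Phi_m(\gamma)|$ by the Möbius formula, and bound each term $\log|\gamma^k-1|$ using Proposition~\ref{pabs} on one side and the trivial estimate $|\gamma^k-1|\le 2$ on the other. The only cosmetic difference is that you split the $2^{\omega(m)}$ squarefree divisors by the sign of $\mu(m/k)$, whereas the paper bounds $\bigl|\log|\gamma^k-1|\bigr|$ uniformly (noting $\log 2\le 10^{12}d^4(\height(\gamma)+1)\log(m+1)$) and then sums; both reduce to the same absorption of $\log 2$ into the constant.
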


\begin{proof}
If ${|\gamma|\ge 1}$ then 
$$
\log|\Phi_m(\gamma)|=\ph(m)\log|\gamma|+\log|\Phi(\gamma^{-1})|\ge \log|\Phi(\gamma^{-1})|. 
$$
Hence, replacing, if necessary,~$\gamma$ by~$\gamma^{-1}$, we may assume ${|\gamma|\le 1}$. 
We have 
\begin{equation}
\label{esumagain}
\log|\Phi_m(\gamma)|=\sum_{n\mid m}\mu\left(\frac mn\right) \log |\gamma^n-1|. 
\end{equation}
Proposition~\ref{pabs} implies that 
$$
2\ge |\gamma^n-1|\ge e^{-10^{12}d^4(\height(\gamma)+1)\log (n+1)}. 
$$
Hence for ${1\le n\le m}$ we have 
$$
\bigl|\log |\gamma^n-1|\bigr|\le 10^{12}d^4(\height(\gamma)+1)\log (m+1). 
$$
Substituting this to~\eqref{esumagain}, we obtain 
$$
\bigl|\log |\Phi_m(\gamma)|\bigr|\le 10^{12}d^4(\height(\gamma)+1)\cdot2^{\omega(m)}\log (m+1).
$$
In particular, we proved~\eqref{elowerreal}. 
\end{proof}

\section{Schinzel's Primitive Divisor Theorem}
\label{sprim}
Let~$\gamma$ be a non-zero algebraic number, not a root of unity. We consider the sequence 
$$
u_n=u_n(\gamma)=\gamma^n-1. 
$$
(Note that in this section $(u_n)$ is a numerical sequence, while in the other sections it is a sequence of polynomials.) A prime $\gerp$ of the number field ${K=\Q(\gamma)}$ is called \textit{primitive divisor} for~$u_n$ if 
$$
\nu_\gerp(u_n) >0, \qquad \nu_\gerp(u_k)=0 \quad (k=1, \ldots, n-1). 
$$
For further use, let us fix here some basic properties of primitive divisors. Recall that ${\Phi_n(T)}$ denotes the $n$th cyclotomic polynomial, and~$\norm\gerp$ is the absolute norm of~$\gerp$. 

\begin{proposition}
\label{pprim}
Assume that~$\gerp$ is a primitive divisor of~$u_n$. Then~$n$ divides ${\norm\gerp-1}$ and ${\nu_\gerp(\Phi_n(\gamma))\ge 1}$. In particular, ${n<\norm\gerp}$. 
\end{proposition}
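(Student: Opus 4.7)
The plan is to translate the primitive divisor condition into a statement about the order of $\gamma$ modulo $\gerp$ and combine this with the cyclotomic factorization $T^n - 1 = \prod_{d \mid n} \Phi_d(T)$.

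First I would verify that $\nu_\gerp(\gamma) = 0$, so that $\gamma$ is a $\gerp$-adic unit. If $\nu_\gerp(\gamma) < 0$ then $\nu_\gerp(\gamma^n - 1) = n\,\nu_\gerp(\gamma) < 0$, contradicting $\gerp \mid u_n$; if $\nu_\gerp(\gamma) > 0$ then $\nu_\gerp(\gamma^n) > 0$ forces $\nu_\gerp(\gamma^n - 1) = 0$, again a contradiction. Hence $\gamma$ reduces to a well-defined element $\bar\gamma$ of the residue field $k_\gerp := \OO_{K,\gerp}/\gerp\OO_{K,\gerp}$, which has cardinality $\norm\gerp$.

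Now $\bar\gamma^n = 1$ in $k_\gerp^\times$, so the multiplicative order $e$ of $\bar\gamma$ divides $n$. For any proper divisor $k$ of $n$ (or indeed any $k < n$) we have $\nu_\gerp(\gamma^k - 1) = 0$ by the primitive divisor assumption, hence $\bar\gamma^k \neq 1$. Therefore $e = n$, and Lagrange's theorem applied to the multiplicative group of $k_\gerp$ yields $n \mid \norm\gerp - 1$, which in particular gives the strict inequality $n < \norm\gerp$.

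For the divisibility $\nu_\gerp(\Phi_n(\gamma)) \ge 1$, I would use the identity
\[
\gamma^d - 1 = \prod_{e \mid d} \Phi_e(\gamma) \qquad (d \ge 1).
\]
Since $\gamma \in \OO_{K,\gerp}$ and $\Phi_e(T) \in \Z[T]$, each factor $\Phi_e(\gamma)$ lies in $\OO_{K,\gerp}$, so $\nu_\gerp(\Phi_e(\gamma)) \ge 0$. For any proper divisor $d$ of $n$ the primitive divisor hypothesis gives $\nu_\gerp(\gamma^d - 1) = 0$, and the sum of non-negative terms $\sum_{e \mid d}\nu_\gerp(\Phi_e(\gamma))$ equals $0$, forcing $\nu_\gerp(\Phi_e(\gamma)) = 0$ for every $e \mid d$. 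Taking $d$ to range over all proper divisors of $n$, we conclude $\nu_\gerp(\Phi_e(\gamma)) = 0$ for every proper divisor $e$ of $n$. Consequently,
\[
\nu_\gerp(\Phi_n(\gamma)) = \nu_\gerp(\gamma^n - 1) - \sum_{\substack{e \mid n \\ e < n}} \nu_\gerp(\Phi_e(\gamma)) = \nu_\gerp(\gamma^n - 1) \ge 1,
\]
as required. No step presents a real obstacle; the only thing to be careful about is that $\gamma$ need not be an algebraic integer globally, but the verification $\nu_\gerp(\gamma) = 0$ in the first paragraph makes all the subsequent reductions at $\gerp$ legitimate.
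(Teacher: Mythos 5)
Your proof is correct, and it is surely the ``very easy'' argument the authors had in mind when they omitted it: you first check $\nu_\gerp(\gamma)=0$ so that reduction modulo $\gerp$ makes sense, then read the primitive-divisor condition as saying that $\bar\gamma$ has exact multiplicative order $n$ in the residue field of $\gerp$ (whence $n\mid\norm\gerp-1$ by Lagrange, and $n<\norm\gerp$), and finally use $\gamma^d-1=\prod_{e\mid d}\Phi_e(\gamma)$ together with $\nu_\gerp(\gamma^d-1)=0$ for proper divisors $d$ of $n$ to force $\nu_\gerp(\Phi_e(\gamma))=0$ for all proper $e\mid n$, so that $\nu_\gerp(\Phi_n(\gamma))=\nu_\gerp(\gamma^n-1)\ge1$. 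Your closing remark about $\gamma$ possibly failing to be a global algebraic integer, and the local verification $\nu_\gerp(\gamma)=0$ being what legitimizes the reductions, is exactly the right point to flag.
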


The proofs are very easy and we omit them.

Schinzel~\cite{Sc74} proved that~$u_n$ admits a primitive divisor for ${n\ge n_0(d)}$, where~$d$ is the degree of~$\gamma$. This was an improvement upon the earlier work~\cite{PS68}, where the same was proved under the assumption ${n\ge n_0(\gamma)}$. 

Stewart~\cite{St77} made Schinzel's result explicit, but he imposed an additional hypothesis ${\gamma=\alpha/\beta}$, where ${\alpha,\beta\in \OO_K}$ are coprime algebraic integers. Here we obtain a fully explicit version of Schinzel's result without any extra hypothesis.

\begin{theorem}
\label{thschin}
Let~$\gamma$ be an algebraic number of degree~$d$, not a root of unity. Assume that 
\begin{equation}
\label{ehypo}
n\ge \max\{2^{d+1},10^{30}d^{9}\}.
\end{equation}
Then ${u_n=\gamma^n-1}$ admits a primitive divisor. 
\end{theorem}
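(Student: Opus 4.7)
The plan is to suppose that $u_n=\gamma^n-1$ has no primitive divisor and derive a contradiction by bounding the size of $\Phi_n(\gamma)$ in two opposing ways. Set $K=\Q(\gamma)$. First I would carry out a local analysis at every prime $\gerp$ of $K$ using the Möbius formula $\Phi_n(T)=\prod_{d\mid n}(T^d-1)^{\mu(n/d)}$. Primes with $\nu_\gerp(\gamma)>0$ contribute nothing to $\nu_\gerp(\Phi_n(\gamma))$; primes with $\nu_\gerp(\gamma)<0$ contribute exactly $\ph(n)\nu_\gerp(\gamma)$, using $\sum_{d\mid n}\mu(n/d)d=\ph(n)$ for $n>1$; and for primes $\gerp$ at which $\gamma$ is a unit, letting $r$ denote the order of $\gamma\bmod\gerp$, a standard lifting-the-exponent argument shows that $\nu_\gerp(\Phi_n(\gamma))>0$ forces either $r=n$ (so $\gerp$ is a primitive divisor of $u_n$) or $n/r$ to be a positive power of the rational prime $p$ beneath $\gerp$, with the uniform bound $\nu_\gerp(\Phi_n(\gamma))\le e_\gerp$ (adjusted by $O(1)$ when $p=2$) in the latter case.

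Assuming no primitive divisor exists, only the second possibility survives, so
\[
\sum_{\nu_\gerp(\Phi_n(\gamma))>0}\nu_\gerp(\Phi_n(\gamma))\log\norm\gerp\;\le\;\sum_{p\mid n}\sum_{\gerp\mid p}e_\gerp\log\norm\gerp\;\le\;d\log n,
\]
while the negative part of the finite contribution equals $-\ph(n)\sum_{\nu_\gerp(\gamma)<0}|\nu_\gerp(\gamma)|\log\norm\gerp$, whose absolute value is at most $\ph(n)\,d\,\height(\gamma)$ by Lemma~\ref{lvals}. Applying the product formula to the non-zero number $\Phi_n(\gamma)$ and using Corollary~\ref{carch} at every complex embedding of $K$ to bound the archimedean contribution from below, I arrive at a master inequality of the shape
\[
\ph(n)\,d\,\height(\gamma)\;\le\;d\log n + 10^{12}d^{5}(\height(\gamma)+1)\cdot 2^{\omega(n)}\log(n+1).
\]

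It remains to show this is incompatible with the hypothesis $n\ge\max\{2^{d+1},10^{30}d^{9}\}$. Since $\gamma$ is not a root of unity, $\height(\gamma)>0$ by Kronecker's theorem, and an effective lower bound such as Dobrowolski's $\height(\gamma)\gg(\log 2d)^{-3}/d$ (or even the weaker $c/d^{2}$ from Blanksby--Montgomery) applies. Combining this with $\ph(n)\gg n/\log\log 3n$ and $2^{\omega(n)}\le n^{1.4/\log\log n}$, the hypothesis on $n$ makes the left-hand side above exceed the right-hand side: the factor $10^{30}d^{9}$ absorbs the Baker constant $10^{12}d^{5}$ together with the Dobrowolski denominator and a margin, while $2^{d+1}$ keeps $\omega(n)$ (and hence $2^{\omega(n)}$) comfortably smaller than $n^{1/2}$, say, in the relevant range. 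The main technical obstacle is twofold: first, the lifting-the-exponent analysis in Step~1 at $p=2$ and under ramification requires careful bookkeeping so that the bound $\nu_\gerp(\Phi_n(\gamma))\le e_\gerp+O(1)$ is maintained uniformly; and second, threading the numerical constants in the final step tightly enough to achieve the precise bound stated in the theorem, which is routine but delicate, is where most of the writing effort will go.
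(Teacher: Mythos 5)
Your overall strategy — bound $\Phi_n(\gamma)$ locally via the Möbius formula, handle the unit primes by lifting-the-exponent, bound the archimedean side by Corollary~\ref{carch}, combine via the product formula, and close with a Lehmer-type lower bound on $\height(\gamma)$ plus estimates on $\ph(n)$ and $\omega(n)$ — is indeed the one used in the paper. The product-formula packaging you propose is equivalent to the paper's height computation (comparing Proposition~\ref{pup} with Theorem~\ref{tas}), so that part is fine, as is the substitution of Dobrowolski for Voutier (the paper uses Voutier's explicit constant~\eqref{evout}; Dobrowolski would work with different numbers).

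However, there is a genuine gap in your understanding of where the hypothesis $n\ge 2^{d+1}$ enters, and it sits exactly where you concede "careful bookkeeping" is needed. You suggest that $2^{d+1}$ is there to keep $2^{\omega(n)}$ small relative to $n$; that is wrong — the control of $\ph(n)$ and $\omega(n)$ in the numerical endgame comes entirely from $n\ge 10^{30}d^{9}$ (and the explicit estimates of Rosser--Schoenfeld and Robin). The hypothesis $n\ge 2^{d+1}$ is consumed entirely inside the local analysis at unit primes $\gerp$, and the difficulty is not "$p=2$ and an $O(1)$ adjustment" but ramification. When $e_\gerp>p-1$, Lemma~\ref{lwellknown} cannot be applied at the level of $\gamma^{\ell_1}$; one must climb to $\gamma^{\ell_{k+1}}$ with $k=\lfloor e_\gerp/(p-1)\rfloor$, and the whole argument collapses unless $\ell_{k+1}<n$. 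Proving $\ell_{k+1}<n$ (the claim~\eqref{eclaim} in Lemma~\ref{lschin}) requires the chain $\ell_{i+1}\le p\,\ell_i$ and $\ell_1\le p^{d/e_\gerp}-1$, hence $\ell_{k+1}<p^{1+d/(p-1)}$, and then a case-by-case verification that $p^{1+d/(p-1)}\le 2^{d+1}$. Without this, the bound $\nu_\gerp(\Phi_n(\gamma))\le e_\gerp$ (or $\le\nu_\gerp(n)$) that your master inequality relies on is not available, and the valuation $\nu_\gerp(\Phi_n(\gamma))$ can a priori be as large as $\nu_\gerp(u_n)$ for a non-primitive~$\gerp$. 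So the "routine but delicate" part is not the numerical threading at the end — that is short — but rather Schinzel's Lemma~4, whose proof is the real content and which you would not be able to reconstruct from your current plan because you have assigned the relevant hypothesis to the wrong step.
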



Theorem~\ref{thschin} is a consequence of the following result, appearing, albeit in a different setting, in Schinzel's work.

\begin{proposition}
\label{pup}
In the above set-up, assume that~$u_n$ does not admit a primitive divisor. Then 
\begin{equation}
\label{eup}
\height(\Phi_n(\gamma)) \le 10^{13}d^4     
(\height(\gamma)+1)\cdot2^{\omega(n)}\log (n+1).
\end{equation}
\end{proposition}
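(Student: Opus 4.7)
The plan is to decompose $\height(\Phi_n(\gamma))$ into local contributions, bound the archimedean ones via Corollary~\ref{carch}, and use the no-primitive-divisor hypothesis to control the non-archimedean ones. Since $\gamma$ is not a root of unity, $\Phi_n(\gamma)\ne 0$, so I may apply formula~\eqref{eheightold} to the vector $(1,\Phi_n(\gamma)^{-1})$ and use the invariance $\height(\Phi_n(\gamma))=\height(\Phi_n(\gamma)^{-1})$ to obtain
\[
d\cdot\height(\Phi_n(\gamma))=-\sum_{K\stackrel\sigma\hookrightarrow\C}\log^-|\Phi_n(\gamma)^\sigma|+\sum_{\gerp}\max\{0,\nu_\gerp(\Phi_n(\gamma))\}\log\norm\gerp,
\]
where $\log^-=\min\{\log,0\}$. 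The archimedean term is controlled directly by Corollary~\ref{carch}: each conjugate $\gamma^\sigma$ has the same degree and height as $\gamma$ and is not a root of unity, so $-\log^-|\Phi_n(\gamma)^\sigma|\le 10^{12}d^4(\height(\gamma)+1)\cdot 2^{\omega(n)}\log(n+1)$, and summing over the $d$ embeddings accounts for nearly the entire claimed bound.

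The non-archimedean sum is where the no-primitive-divisor hypothesis enters. Let $\gerp$ contribute, so $\nu_\gerp(\Phi_n(\gamma))>0$. Using $\Phi_n(\gamma)=\prod_{d\mid n}(\gamma^d-1)^{\mu(n/d)}$, a short case analysis rules out primes where $\nu_\gerp(\gamma)\ne 0$: if $\nu_\gerp(\gamma)>0$ then every $\gamma^d-1$ is a $\gerp$-unit, and if $\nu_\gerp(\gamma)<0$ then $\nu_\gerp(\gamma^d-1)=d\nu_\gerp(\gamma)$ forces $\nu_\gerp(\Phi_n(\gamma))=\ph(n)\nu_\gerp(\gamma)<0$. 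Hence $\gamma$ is a $\gerp$-unit with a multiplicative order $\ell=\ell_\gerp$ modulo $\gerp$ which divides $n$ and, by the hypothesis, satisfies $\ell<n$.

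Setting $\beta=\gamma^\ell$, $m=n/\ell$, and letting $p$ be the residue characteristic of $\gerp$, I would invoke a $p$-adic lifting-the-exponent identity of the shape $\nu_\gerp(\beta^e-1)=\nu_\gerp(\beta-1)+e(\gerp\mid p)\nu_p(e)$ (valid in the unramified range, with explicit adjustments when wildly ramified). Inserting this into
\[
\nu_\gerp(\Phi_n(\gamma))=\sum_{e\mid m}\mu(m/e)\nu_\gerp(\beta^e-1)
\]
and computing the Möbius sum $\sum_{e\mid m}\mu(m/e)\nu_p(e)$ — which equals $1$ when $m$ is a positive power of $p$ and $0$ otherwise — yields $\nu_\gerp(\Phi_n(\gamma))\le e(\gerp\mid p)$. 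Summing over all contributing primes,
\[
\sum_\gerp\max\{0,\nu_\gerp(\Phi_n(\gamma))\}\log\norm\gerp\le\sum_{p\mid n}\sum_{\gerp\mid p}e(\gerp\mid p)f(\gerp\mid p)\log p=d\sum_{p\mid n}\log p\le d\log n,
\]
and dividing by $d$ absorbs this comfortably into the slack between $10^{12}$ and $10^{13}$ in the target bound.

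The principal obstacle is the $p$-adic cyclotomic step of the third paragraph: one must supply a lifting-the-exponent identity for $\nu_\gerp(\beta^e-1)$ that is sharp enough — and, in particular, usable in the wildly ramified situation $p\mid e(\gerp\mid p)$ — for the Möbius cancellation to collapse to the clean bound $e(\gerp\mid p)$. Granted that $p$-adic input, the remainder of the proof is assemblage of ingredients already available (formula~\eqref{eheightold}, Corollary~\ref{carch}, and the standard identities for $\ell_\gerp$ recalled in Proposition~\ref{pprim}).
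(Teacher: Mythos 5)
Your overall architecture is the same as the paper's: decompose $\height(\Phi_n(\gamma))$ via~\eqref{ehmin}/\eqref{eheightold}, bound the archimedean places with Corollary~\ref{carch} (giving $10^{12}d^4(\height(\gamma)+1)\cdot 2^{\omega(n)}\log(n+1)$ after averaging over the $d$ embeddings), and control the non-archimedean places with a lifting-the-exponent argument. That part of your write-up is correct and matches the paper. But the gap you flag in your last paragraph is genuine, and it is exactly where the substance of the proof resides.

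The problem with your third paragraph is the threshold in the LTE lemma (the paper's Lemma~\ref{lwellknown}): it requires $\nu_\gerp(\beta-1) > e_\gerp/(p-1)$. Setting $\beta=\gamma^{\ell}$ with $\ell$ the multiplicative order of $\gamma$ modulo $\gerp$ only gives $\nu_\gerp(\beta-1)\ge 1$, so whenever $e_\gerp\ge p-1$ (in particular all primes $p\le d+1$, including the wildly ramified ones) the hypothesis can fail and the Möbius collapse to $\nu_\gerp(\Phi_n(\gamma))\le e_\gerp$ is not established by what you wrote. The paper's Lemma~\ref{lschin} (Schinzel's ``Lemma~4'') works around this by introducing the orders $\ell_i$ modulo $\gerp^i$ and replacing $\ell_1$ with $\ell_{k+1}$ where $k=\lfloor e_\gerp/(p-1)\rfloor$, so that $\nu_\gerp(\gamma^{\ell_{k+1}}-1)\ge k+1 > e_\gerp/(p-1)$; one then expands $\nu_\gerp(\Phi_n(\gamma))$ as in~\eqref{eschinzel}, kills the double sum because $\ell_i<n$ for $i\le k$, and applies LTE only from the level $\ell_{k+1}$ upward. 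The crucial and non-trivial step is the claim $n>\ell_{k+1}$, which is what the hypothesis $n\ge 2^{d+1}$ (from Theorem~\ref{thschin}, tacitly in force for Proposition~\ref{pup}) is there for: one shows $\ell_{k+1}\le p^k\ell_1\le p^{e_\gerp/(p-1)+d/e_\gerp}\le p^{1+d/(p-1)}$ and then checks $p^{1+d/(p-1)}\le 2^{d+1}$ case by case in $p$. Without this, the non-archimedean contribution at the small or wildly ramified primes is uncontrolled. So you have correctly reduced the statement to the right lemma, but that lemma is the heart of Schinzel's theorem and must be supplied; ``assemblage of ingredients already available'' overstates how close the sketch is to complete.
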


\subsection{Proof of Proposition~\ref{pup}}
We start from the following well-known fact.

\begin{lemma}
\label{lwellknown}
Let~$K$ be a number field of degree~$d$ and~$p$ a prime number. Let~$\gerp$ be a prime of~$K$ above~$p$ of ramification index~$e_\gerp$ (that is, ${e_\gerp=\nu_\gerp(p)}$). Let ${\xi\in K}$ satisfy 
$$
\nu_\gerp(\xi-1) >\frac {e_\gerp}{p-1}. 
$$
Then for any positive integer~$n$ we have
$$
\nu_\gerp(\xi^n-1)=\nu_\gerp(\xi-1)+\nu_\gerp(n). 
$$
\end{lemma}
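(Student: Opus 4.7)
The plan is to prove this by reducing to the prime case via a standard ``lifting the exponent'' argument in the local field $K_\gerp$, carried out directly with the valuation $\nu_\gerp$. Let me set $v := \nu_\gerp(\xi-1)$; the hypothesis is $v > e_\gerp/(p-1)$. Note $v$ may be $+\infty$ (if $\xi=1$), in which case the claim is trivial, so assume $v < \infty$.

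\textbf{Step 1 (case $p \nmid n$).} I would use the factorisation
\[
\xi^n - 1 = (\xi - 1)(\xi^{n-1} + \xi^{n-2} + \cdots + 1).
\]
Since $\xi \equiv 1 \pmod{\gerp}$, the second factor is congruent to $n \bmod \gerp$, and $\nu_\gerp(n) = 0$ as $p \nmid n$. Hence $\nu_\gerp(\xi^n - 1) = v = v + \nu_\gerp(n)$.

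\textbf{Step 2 (case $n = p$).} Here the key is the binomial expansion
\[
\xi^p - 1 = \sum_{k=1}^{p}\binom{p}{k}(\xi-1)^k.
\]
The $k=1$ term has $\gerp$-valuation $e_\gerp + v$. For $2 \le k \le p-1$, the binomial coefficient is divisible by $p$, so the valuation is at least $e_\gerp + kv > e_\gerp + v$. For $k = p$, the valuation is $pv$, and the hypothesis $v > e_\gerp/(p-1)$ is exactly what is needed to ensure $pv > e_\gerp + v$. So the $k=1$ term is strictly dominant, giving $\nu_\gerp(\xi^p - 1) = e_\gerp + v = v + \nu_\gerp(p)$.

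\textbf{Step 3 (general $n$).} Write $n = p^a m$ with $\gcd(m,p) = 1$, and induct on $a$. The inductive step applies Step 2 to $\xi^{p^{a-1}}$ in place of $\xi$; the hypothesis propagates because after Step 2 the valuation increases from $v$ to $v + e_\gerp$, which still exceeds $e_\gerp/(p-1)$. This yields $\nu_\gerp(\xi^{p^a} - 1) = v + ae_\gerp$. Then Step 1, applied to $\xi^{p^a}$ with exponent $m$, finishes the proof: $\nu_\gerp(\xi^n - 1) = v + ae_\gerp = v + \nu_\gerp(n)$.

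The only subtle point, and thus the main thing to check carefully, is the inequality $pv > e_\gerp + v$ in Step 2, which is the precise reason the hypothesis takes the form $v > e_\gerp/(p-1)$; without this, the $k=p$ term could tie with (or beat) the $k=1$ term and the identity would fail. Everything else is bookkeeping of valuations and the standard fact that $p \mid \binom{p}{k}$ for $1 \le k \le p-1$.
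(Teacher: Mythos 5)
Your proof is correct: the three-step lifting-the-exponent argument (coprime-to-$p$ case via the geometric-sum factorisation, the $n=p$ case via the binomial expansion with the hypothesis $v>e_\gerp/(p-1)$ exactly making the $k=1$ term strictly dominant, then iteration) is the standard and complete way to establish this. Note, though, that the paper does not prove the lemma itself; it simply cites \cite[Lemma~1]{PS68}, so your write-up supplies a self-contained proof where the paper defers to a reference, and it is almost certainly the same computation one finds in that source. One tiny cosmetic point: in Step~3 you implicitly use that the inductive hypothesis $\nu_\gerp(\xi^{p^{a-1}}-1)=v+(a-1)e_\gerp$ still exceeds $e_\gerp/(p-1)$ before invoking Step~2; you do flag this, and it is indeed immediate since the valuation only increases.
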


The proof of the lemma can be found, for instance, in \cite[Lemma~1]{PS68}.

\begin{lemma}
\label{lschin}
Let~$\gamma$ be an algebraic number of degree~$d$, not a root of unity, and~$n$ an integer satisfying ${n\ge 2^{d+1}}$. Let~$\gerp$ be a prime  of the field $\Q(\gamma)$ which is not a primitive divisor of ${u_n=\gamma^n-1}$. Then 
${\nu_\gerp(\Phi_n(\gamma))\le \nu_\gerp(n)}$. 
\end{lemma}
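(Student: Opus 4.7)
My plan begins with trivial reductions. If $\nu_\gerp(\gamma)<0$, the M\"obius expansion gives $\nu_\gerp(\Phi_n(\gamma))=\nu_\gerp(\gamma)\,\ph(n)<0\le\nu_\gerp(n)$; if $\nu_\gerp(\gamma)>0$ each factor $\gamma^d-1$ is a $\gerp$-adic unit, so $\nu_\gerp(\Phi_n(\gamma))=0$. Hence I may assume $\gamma$ is a $\gerp$-adic unit, and I let $r$ be the order of $\bar\gamma$ in $\kappa(\gerp)^\times$. If $r\nmid n$, then $\nu_\gerp(\gamma^d-1)=0$ for every $d\mid n$, so $\nu_\gerp(\Phi_n(\gamma))=0$; the case $r=n$ is excluded since it would make $\gerp$ a primitive divisor of $u_n$. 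Thus $r$ is a proper divisor of $n$, and setting $\xi=\gamma^r$, $M=n/r>1$, the M\"obius expansion telescopes (contributions from $d$ with $r\nmid d$ vanish) to
\begin{equation*}
\nu_\gerp(\Phi_n(\gamma))=\nu_\gerp(\Phi_M(\xi)).
\end{equation*}

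Since $\bar\xi=1$ in $\kappa(\gerp)$, we have $\Phi_M(\xi)\equiv\Phi_M(1)\pmod\gerp$. The classical evaluation ``$\Phi_M(1)=p$ when $M=p^k$ is a prime power with $k\ge1$, and $\Phi_M(1)=1$ otherwise'' shows that $\nu_\gerp(\Phi_M(\xi))$ can be positive only when $M=p^k$ for the residue characteristic $p$ of $\gerp$; in all other cases $\nu_\gerp(\Phi_n(\gamma))=0\le\nu_\gerp(n)$ immediately. Since $r\mid\norm\gerp-1$ is coprime to $p$, this forces $\nu_\gerp(n)=ke_\gerp$, and the goal reduces to showing $\nu_\gerp(\Phi_{p^k}(\xi))\le ke_\gerp$.

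For this I would iterate $\Phi_{p^k}(\xi)=\Phi_p(\xi^{p^{k-1}})$ together with the Taylor-style expansion (a consequence of the hockey-stick identity)
\begin{equation*}
\Phi_p(1+v)=p+\tbinom{p}{2}v+\cdots+\tbinom{p}{p-1}v^{p-2}+v^{p-1}.
\end{equation*}
Writing $s=\nu_\gerp(\xi-1)$: in the \emph{regular regime} $s>e_\gerp/(p-1)$, Lemma~\ref{lwellknown} gives $\nu_\gerp(\xi^{p^j}-1)=s+je_\gerp$ for every $j$, whence the telescoping yields $\nu_\gerp(\Phi_{p^k}(\xi))=e_\gerp\le ke_\gerp$. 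In the \emph{strict singular regime} $(p-1)s<e_\gerp$ the term $v^{p-1}$ strictly dominates the expansion, so $\nu_\gerp(\xi^{p^j}-1)=p^js$ until the threshold is crossed, after which Lemma~\ref{lwellknown} takes over; tracking the successive differences yields $\nu_\gerp(\Phi_{p^k}(\xi))\le e_\gerp\le ke_\gerp$ throughout.

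The main obstacle is the \emph{boundary case} $(p-1)s=e_\gerp$, where the constant term $p$ and the leading term $v^{p-1}$ share the minimal valuation and can cancel, potentially inflating $\nu_\gerp(\Phi_p(\xi))$ beyond $e_\gerp$. This is precisely where the numerical hypothesis $n\ge2^{d+1}$ enters: combined with $r\le\norm\gerp-1\le p^d-1$ it forces $p^k=n/r>2^{d+1}/p^d$, which yields $k\ge2$ in the critical case $p=2$ (and, for odd primes, the boundary condition already demands $p-1\mid e_\gerp\le d$, so $p\le d+1$ and an analogous lower bound on $k$ follows). Thus $ke_\gerp\ge 2e_\gerp$, which is enough to absorb any cancellation-driven inflation of $\nu_\gerp(\Phi_p(\xi))$; a careful verification, excluding the complete algebraic collapse $p+v^{p-1}=0$ (which would force $\gamma$ to be a root of unity, against hypothesis), then gives the desired bound $\nu_\gerp(\Phi_{p^k}(\xi))\le ke_\gerp$.
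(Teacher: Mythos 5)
Your reduction to $\nu_\gerp(\Phi_M(\xi))$ with $\xi=\gamma^r$, $M=n/r$, and then to $M=p^k$, is correct and a genuinely different decomposition from the paper's. (The paper works directly with the divisor sum for $\Phi_n(\gamma)$, introduces $\ell_i$ = multiplicative order of $\gamma$ mod $\gerp^i$, and makes the clever choice $k=\lfloor e_\gerp/(p-1)\rfloor$; your choice of threshold is essentially $k=1$, i.e.\ $r=\ell_1$.) But the final step of your argument has a genuine gap.

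The problem is the boundary case, and it is worse than you make it out to be. First, your trichotomy based on $s=\nu_\gerp(\xi-1)$ does not cleanly separate the difficulties: even when $s$ starts in the ``strict singular regime'' $(p-1)s<e_\gerp$, the sequence $s_j=\nu_\gerp(\xi^{p^j}-1)$ can land exactly on the threshold $e_\gerp/(p-1)$ at step $j=k-1$, and then the jump to $s_k$ is again uncontrolled; so the dangerous configuration is $s_{k-1}=e_\gerp/(p-1)$, which is not covered by your case~2 analysis. Second, and more seriously, the assertion that $k\ge 2$ ``is enough to absorb any cancellation-driven inflation of $\nu_\gerp(\Phi_p(\xi))$'' is simply not proved and is not a priori true: for $p=2$ one has $\Phi_2(1+v)=2+v$, and when $\nu_\gerp(v)=e_\gerp=\nu_\gerp(2)$ nothing in the local structure prevents $\nu_\gerp(2+v)$ from being large, certainly larger than $2e_\gerp$. (Excluding the exact algebraic vanishing $\Phi_p(\eta)=0$ only gives finiteness, not a numerical bound.) Your parenthetical derivation of $k\ge 2$ for odd $p$ is also incomplete.

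What actually saves the argument is not that $ke_\gerp$ can absorb the inflation, but that the boundary configuration $s_{k-1}=e_\gerp/(p-1)$ is \emph{vacuous} under the hypothesis $n\ge 2^{d+1}$: it forces $s_0=e_\gerp/(p^{k-1}(p-1))\in\Z_{>0}$, hence $p^{k-1}(p-1)\le e_\gerp$, which together with $e_\gerp f_\gerp\le d$ and $r\le p^{f_\gerp}-1$ bounds $n=rp^k$ below $2^{d+1}$. Proving that cleanly requires a careful numerical verification of the same flavour as the paper's proof of its claim $n>\ell_{k+1}$, and that verification is exactly what is missing from your write-up. As it stands, the proof is not complete.
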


This is Schinzel's~\cite{Sc74} crucial ``Lemma~4''. Since his set-up is slightly different, we reproduce the proof here.

\begin{proof}
We may assume that ${\nu_\gerp(\gamma^n-1)>0}$, since there is nothing to prove otherwise. In particular, ${\nu_\gerp(\gamma)=0}$.

For ${k=0,1,2\ldots}$ denote~$\ell_k$ the multiplicative order of ${\gamma \bmod \gerp^k}$; that is,~$\ell_k$ is the smallest positive integer~$\ell$ with the property ${\nu_\gerp(\gamma^\ell- 1)\ge k}$. Clearly, ${\nu_\gerp(\gamma^n -1)\ge k}$ if and only if ${\ell_k\mid n}$. Together with~\eqref{ecyclomu} this implies that for every~$k$ the following holds:
\begin{equation}
\label{eschinzel}
\nu_\gerp\bigl(\Phi_n(\gamma)\bigr)= \sum_{i=1}^k \sum_{\ell_i\mid m\mid n}\mu\left(\frac nm\right)+\sum_{\ell_{k+1}\mid m\mid n}\mu\left(\frac nm\right)\bigl(\nu_\gerp(\gamma^m-1) -k\bigr)
\end{equation}
Let~$p$ be the rational prime below~$\gerp$ and ${e_\gerp=\nu_\gerp(p)}$ the ramification index. We will apply~\eqref{eschinzel} with 
$$
k=\left\lfloor \frac {e_\gerp}{p-1}\right\rfloor, 
$$
which will be our choice of~$k$ from now on. 
We claim that
\begin{equation}
\label{eclaim}
n>\ell_{k+1}.
\end{equation}
We postpone the proof of~\eqref{eclaim} (which is a bit messy) until later, and now complete the proof of the lemma assuming  validity of~\eqref{eclaim}. 

Since ${n>\ell_{k+1}\ge\ell_i}$ for ${i=1, \ldots, k}$, the double sum in~\eqref{eschinzel} vanishes. Also, 
if ${\ell_{k+1}\mid m}$ then
$$
\nu_\gerp(\gamma^m-1)=\nu_\gerp(\gamma^{\ell_{k+1}}-1)+\nu_\gerp\left(\frac{m}{\ell_{k+1}}\right) 
$$
by Lemma~\ref{lwellknown}. Hence~\eqref{eschinzel} can be rewritten as 
\begin{equation}
\label{eschinzelbis}
\nu_\gerp\bigl(\Phi_n(\gamma)\bigr)= \sum_{\ell_{k+1}\mid m\mid n}\mu\left(\frac nm\right)\bigl(\nu_\gerp(\gamma^{\ell_{k+1}}-1) -k\bigr)+ \sum_{\ell_{k+1}\mid m\mid n}\mu\left(\frac nm\right)\nu_\gerp\left(\frac{m}{\ell_{k+1}}\right). 
\end{equation}
Since ${n>\ell_{k+1}}$, the first sum in~\eqref{eschinzelbis} vanishes. As for the second sum, it vanishes (just being empty) if ${\ell_{k+1}\nmid n}$. From now on assume that ${\ell_{k+1}\mid n}$ and set ${n'=n/\ell_{k+1}}$. We obtain
\begin{equation*}
\nu_\gerp\bigl(\Phi_n(\gamma)\bigr)= e_\gerp\sum_{m'\mid n'}\mu\left(\frac {n'}{m'}\right)\nu_p\left(m'\right)=
\begin{cases}
e_\gerp,& \text{$n'$ is a power of~$p$},\\
0,& \text{otherwise}.
\end{cases}
\end{equation*}
In any case we obtain ${\nu_\gerp\bigl(\Phi_n(\gamma)\bigr)\le \nu_\gerp(n)}$. This proves the lemma. 

We are left with the claim~\eqref{eclaim}. Note first of all that 
\begin{equation}
\label{eellone}
n>\ell_1 
\end{equation}
because~$\gerp$ is not a primitive  divisor of~$u_n$.
Another useful observation is that 
\begin{equation}
\label{epelli}
\ell_{i+1}\le p\ell_i\qquad (i=1,2, \ldots).
\end{equation}
Indeed, 
$$
\gamma^{p\ell_i}-1=\sum_{j=1}^{p-1}\binom pj(\gamma^{\ell_i}-1)^j+(\gamma^{\ell_i}-1)^p,
$$
which implies that ${\nu_\gerp(\gamma^{p\ell_i}-1)>\nu_\gerp(\gamma^{\ell_i}-1)}$, proving~\eqref{epelli}. 

If ${k=0}$ then~\eqref{eclaim} is~\eqref{eellone}. Now assume that ${k\ge 1}$. In this case 
\begin{equation}
\label{ebet}
p-1\le e_\gerp\le d.
\end{equation}
On the other hand, let ${p^{f_\gerp}=\norm\gerp}$ be the absolute norm of~$\gerp$. Clearly, 
$$
\ell_1\le p^{f_\gerp}-1\le p^{d/e_\gerp}-1.
$$
In the special case ${p=3}$, ${e_\gerp=d=2}$ we have ${k=1}$ and 
${\ell_2\le p\ell_1 \le 6}$. 
Since ${n\ge 2^{d+1}=8}$ by the hypothesis, this proves~\eqref{eclaim} in this special case.
From now on we assume that ${d\ge 3}$ for ${p=3}$.  

Using~\eqref{epelli} iteratively, we obtain 
$$
\ell_{k+1}\le p^k\ell_1 <p^{e_\gerp/(p-1)+d/e_\gerp}\le \max_{p-1\le t\le d}p^{t/(p-1)+d/t}=p^{1+d/(p-1)}.
$$
We have to show that 
\begin{equation*}
p^{1+d/(p-1)}\le 2^{d+1}.
\end{equation*}
This is true by inspection in the cases 
$$
p=2, \qquad p=3,\ d\ge 3, \qquad p=5, \ d\ge 4.
$$
Now assume that ${p\ge 7}$, in which case ${d\ge 6}$. Since ${p\le d+1}$, we have 
$$
p^{1+d/(p-1)} \le (d+1) \cdot7^{d/6}. 
$$
A calculation shows that 
${(d+1) \cdot7^{d/6} \le 2^{d+1}}$
for ${d\ge 6}$. This completes the proof of~\eqref{eclaim}. 
\end{proof}

\paragraph{Proof of Proposition~\ref{pup}}
We use~\eqref{ehmin} with ${\alpha=\Phi_n(\gamma)}$. 
For ${v\in M_K}$ we have 
$$
-\log^-|\Phi_n(\gamma)|_v\le
\begin{cases}
10^{12}d^4(\height(\gamma)+1)\cdot2^{\omega(n)}\log (n+1), & v\mid \infty,\\
-\log|n|_v, & v<\infty.
\end{cases}
$$
Indeed, the archimedean case is Corollary~\ref{carch}, and the non-archimedean case is Lemma~\ref{lschin}. Summing up, we obtain
$$
\height(\Phi_n(\gamma))\le 10^{12}d^4(\height(\gamma)+1)\cdot2^{\omega(n)}\log (n+1)+\log n,
$$
which is sharper than~\eqref{eup}. 
\qed

\subsection{Proof of Theorem~\ref{thschin}}

Assume~$u_n$ does not have a primitive divisor, but~$n$ satisfies~\eqref{ehypo}. We have, in particular, ${n\ge 10^{30}}$. 
Comparing  Proposition~\ref{pup} and Theorem~\ref{tas}, we obtain
\begin{align*}
\ph(n)\height(\gamma)&\le 
10^{13}d^4     %
(\height(\gamma)+1)\cdot2^{\omega(n)}\log (n+1)+2^{\omega(n)}\log (\pi n)\\
&\le 
10^{14}d^4     
(\height(\gamma)+1)\cdot2^{\omega(n)}\log (n+1). 
\end{align*}  
Since~$\gamma$ is not a root of unity, we have 
\begin{equation}
\label{evout}
d\height(\gamma) \ge 2(\log(3d))^{-3},
\end{equation}
see \cite[Corollary~2]{Vo96}. Hence 
$$
\ph(n)\height(\gamma)\le  
10^{15}d^5    (\log(3d))^3 
\height(\gamma)\cdot2^{\omega(n)}\log (n+1),
$$
which implies 
\begin{equation}
\label{ealmost}
\ph(n)\le  
10^{15}d^5    (\log(3d))^3 
\cdot2^{\omega(n)}\log (n+1).
\end{equation}
For ${n\ge 10^{30}}$ we have 
\begin{equation}
\label{eboundphomega}
\ph(n) \ge 0.5 \frac n{\log\log n},\qquad
\omega(n)\le \frac{\log n}{\log\log n-1.2}, 
\end{equation}
see \cite[Theorem~15]{RS62} and~\cite[Theorem~13]{Ro83}. 
Hence for ${n\ge 10^{30}}$ 
\begin{align*}
2^{\omega(n)}\frac{n}{\ph(n)}\log(n+1) &\le   n^{(\log2)/(\log\log(10^{30})-1.2)} \cdot 2(\log\log n) \cdot \log(n+1)\\
&\le n^{1/3}. 
\end{align*}
Using this, we deduce from~\eqref{ealmost} the inequality 
${n^{2/3}\le 10^{15}d^5    (\log(3d))^3 }$. 
A quick calculation shows that this inequality is incompatible with~\eqref{ehypo}. \qed

\section{Proof of Theorems~\ref{thm:thmmain} and~\ref{thm:thmmain1}}
\label{sproofs}


Since condition~\ref{iex} of Theorem~\ref{thm:thmmain} trivially implies condition~\ref{iinf} (see Example~\ref{exinf}) it suffices to prove Theorem~\ref{thm:thmmain1}. Thus, in the sequel:
\begin{itemize}
\item
$c_i(x)$ and $f_i(x)$ are polynomials not satisfying  condition~\ref{iex} of Theorem~\ref{thm:thmmain} and 
$$
u_n(x)=c_1(x)f_1(x)^n+c_2(x)f_2(x)^n \qquad (n=1,2,\ldots); 
$$
 
\item
$m$ and~$n$ are positive integers  such that ${u_n(\zeta)=0}$ for a primitive $m$th root of unity~$\zeta$; since  ${u_n(x)\in \Q[x]}$, this is equivalent to 
\begin{equation}
\Phi_m(x)\mid u_n(x). 
\end{equation} 
\end{itemize}


\subsection{Some reductions}

We start by some general observations. 

\begin{itemize}

\item
We may assume that
\begin{equation}
\label{enonv}
c_1(\zeta)c_2(\zeta)f_1(\zeta)f_2(\zeta) \ne 0. 
\end{equation}
Otherwise ${\ph(m)\le D}$, and, using
\begin{equation}
\label{ephgeroot}
\ph(m)\ge m^{1/2} \qquad (m\ne 2,6) 
\end{equation}
(see~\cite{Va67}), we obtain ${m\le \max\{6, D^2\}}$, which is much sharper than what we want to prove. 

\item
We may assume that at least one of $f_1,f_2$ is a non-constant polynomial. Otherwise ${\deg u_n(x)\le D}$, and we again obtain ${\ph(m)\le D}$.

\item
We may assume that ${n>D}$. Otherwise ${\deg u_n(x) \le D+D^2}$, and, using~\eqref{ephgeroot} we obtain ${m\le \max\{6, (D+D^2)^2\}}$, again much sharper than the wanted result.

\item
Replacing  ${c_i(x)}$ and ${f_i(x)}$  by 
$$
\tilc_i(x):=c_i(x)/\gcd(c_1(x),c_2(x)), \qquad \tilf_i(x):=f_i(x)/\gcd(f_1(x),f_2(x)),
$$
respectively, we may assume that the polynomials $c_1,c_2$ are coprime in the ring $\Q[x]$, and so are $f_1,f_2$: 
\begin{equation}
\label{ecoprime}
\gcd(c_1(x),c_2(x))=\gcd(f_1(x),f_2(x)) =1. 
\end{equation}
Lemma~\ref{lhdivide} implies that 
$$
\height(\tilc_1,\tilc_2) \le \height(c_1,c_2)+(D+1)\log2 \le X+(D+1)\log2, 
$$
and similarly for ${\height(\tilf_1,\tilf_2)}$. Hence, to prove~\eqref{eupperm} in the general case, it suffices to prove
\begin{equation}
\label{eupperngam}
m\le e^{30D(X+D)}
\end{equation}
in the ``coprime case'', that is, assuming~\eqref{ecoprime}.

\end{itemize}

We distinguish several cases according to the nature of roots of our polynomials:

\begin{enumerate}
\item
$f_1(x)f_2(x)$ admits a root which is non-zero and not a root of unity;

\item
$f_1(x)f_2(x)$ vanishes at  a root of unity;

\item
$f_1(x)f_2(x)$ vanishes only at~$0$. 
\end{enumerate}

These cases are treated separately in the subsequent subsections.

\subsection{The polynomial $f_1(x)f_2(x)$ admits a root~$\gamma$ which is non-zero and not a root of unity}
\label{ssgamma}

By symmetry, we may  assume that~$\gamma$ is a root of $f_1(x)$. 
Since the statement of Theorem~\ref{thm:thmmain1} is invariant under multiplication of the polynomials $c_1,c_2$ by the same non-zero rational number, we may assume that the polynomial $c_1(x)$ is monic. Similarly, we may assume that $f_1(x)$ is monic.

Denote ${K=\Q(\gamma)}$. Then 
$$
d:=[K:\Q]\le D.
$$
Since ${X\ge 3}$, the right-hand side of~\eqref{eupperngam} exceeds  ${10^{30}D^9}$. 
Hence we may assume that 
$$
m>\max\{ 2^{d+1}, 10^{30}d^9\}. 
$$ 
Theorem~\ref{thschin} together with Proposition~\ref{pprim} implies now that there exists a prime~$\gerp$ of~$K$ such that  ${\nu_\gerp(\Phi_m(\gamma))>0}$ and
\begin{equation*}
m<\norm\gerp.
\end{equation*}
So we only have to bound $\norm\gerp$.

\subsubsection{The numbers~$\beta$ and~$\delta$}
We have ${f_2(\gamma)\ne 0}$ by~\eqref{ecoprime}. However, it it possible that ${c_2(\gamma)=0}$. Denote~$r$  the order of~$\gamma$ as a root of $c_2(x)$, and set
$$
\beta=\frac{c_2^{(r)}(\gamma)}{r!}, \qquad \delta=f_2(\gamma), 
$$
These are non-zero elements of the number field~$K$.

We claim that one of the following holds:
\begin{align}
\label{elocal}
\nu_\gerp(\alpha)&<0 \quad\text{for some coefficient $\alpha$ of~$c_1$ or~$f_1$ or~$c_2$ or~$f_2$};\\
\label{ebeta}
\nu_\gerp(\beta) &>0;\\
\label{edelta}
\nu_\gerp(\delta)&>0. 
\end{align}
Indeed, since ${\nu_\gerp(\Phi_m(\gamma))>0}$, there exists a primitive $m$th rooth of unity~$\zeta$ and a prime ${\gerP\mid \gerp}$ of the field $K(\zeta)$ such that 
$$
\nu_\gerP(\zeta-\gamma) >0. 
$$
Now, if~\eqref{elocal} does not hold, then  our four polynomials belong to ${\OO_\gerP[x]}$, where~$\OO_\gerP$ is the local ring  of~$\gerP$. Moreover, since~$f_1$ is monic, ${\gamma\in \OO_\gerP}$. Hence  the polynomials 
$$
F(x):=\frac{c_1(x)f_1(x)^n}{(x-\gamma)^r}, \qquad G(x) :=\frac{c_2(x)}{(x-\gamma)^r}
$$
belong to $\OO_\gerP[x]$ as well.  Note that $F(x)$ is indeed a polynomial, and moreover 
$$
F(\gamma) =0,
$$
because ${n>D\ge r}$. 

We have ${\beta=G(\gamma)}$ and 
${F(\zeta) =-G(\zeta)f_2(\zeta)^n}$  (because ${u_n(\zeta)=0}$). 
This implies the following congruences  in the ring~$\OO_\gerP$:
\begin{align*}
\beta\delta^n \equiv G(\zeta)f_2(\zeta)^n \equiv -F(\zeta)\equiv -F(\gamma) \equiv 0\mod\gerP. 
\end{align*}
Hence either ${\beta\equiv0\bmod \gerP}$ or ${\delta\equiv0\bmod \gerP}$, which means that one of~\eqref{ebeta} or~\eqref{edelta} holds true.

\subsubsection{Estimates}
Now we are ready to estimate $\norm\gerp$. Using Lemma~\ref{lvals}, we obtain 
\begin{equation}
\log\norm\gerp \le \max\{\height(1,c_1),\height(1,c_2),\height(1,f_1), \height(1,f_2), \height(\beta), \height(\delta)\}. 
\end{equation}
Since $f_1(x)$ is a monic polynomial, we have 
\begin{equation}
\label{ehfoneftwo}
\height(1,f_1),\height(1,f_2)\le \height(f_1,f_2) \le X, 
\end{equation}
and similarly for $c_1,c_2$. 
Furthermore, using Lemma~\ref{lhpol}, we find
\begin{align*}
\height(\gamma) & \le \height(f_1)+\log2\\
& \le X+\log2,\\
\height(\delta) &\le \height(1,f_2)+ D\height(\gamma) + \log(D+1)\\
&\le (D+1)X+2D,\\
\height(\beta) &\le \height(1, c_2^{(r)}/r!) + D\height(\gamma) + \log(D+1) \\
& \le \height(1,c_2) + D\log2 +DX+D\log2+\log(D+1) \\
&\le (D+1)X+2D. 
\end{align*}
This implies that 
$$
\log\norm\gerp \le (D+1)X+2D <3DX. 
$$
Since ${m< \norm\gerp}$, this proves~\eqref{eupperngam}.

\subsection{The polynomial $f_1(x)f_2(x)$ vanishes at  a root of unity~$\xi$}

We may assume that ${f_1(\xi)=0}$. Then ${f_2(\xi)\ne 0}$ by~\eqref{ecoprime}. 

Let us describe our argument informally. Since ${f_1(\xi)/f_2(\xi)=0}$, there exists ${\eps>0}$  such that ${|f_1(z)/f_2(z)|\le 1/2}$ when ${|z-\xi|\le\eps}$. 

Now assume that ${u_n(\zeta)=0}$ for some primitive  $m$th root of unity~$\zeta$.  Using~\eqref{enonv}, we may write 
\begin{equation}
\label{ealphanow}
0\ne\alpha:=\frac{c_2(\zeta)}{c_1(\zeta)}=-\left(\frac{f_1(\zeta)}{f_2(\zeta)}\right)^n. 
\end{equation}  

Let ${\Q(\zeta)\stackrel\sigma\hookrightarrow \C}$ be a complex embedding of the field~$\Q(\zeta)$ such that~$\zeta^\sigma$ belongs to the $\eps$-neighborhood of~$\xi$. Then
${|\alpha^\sigma| \le (1/2)^n}$. 
Define
\begin{equation}
\label{ebetanow}
\beta:=\prod_{|\zeta^\sigma-\xi|\le \eps} \alpha^\sigma,
\end{equation}
the product being over all~$\sigma$ as above. Since the $\eps$-neighborhood of~$\xi$  contains a positive proportion  of primitive $m$th roots of unity, we have
$$
-\log|\beta|\gg n\ph(m), 
$$
where the implied constant depends on  our polynomials $c_i$ and $f_i$ and on our choice of~$\eps$.

On the other hand,  ${\alpha\ne 0}$, and ${\height(\alpha) \ll1}$ by Lemma~\ref{lhpol}. Hence Liouville's inequality (Lemma~\ref{lliouv}) implies that 
$$
-\log|\beta| = \sum_{|\zeta^\sigma-\xi|\le \eps}-\log |\alpha^\sigma| \ll [\Q(\zeta):\Q]=\ph(m). 
$$
This bounds~$n$. 

This all will be made explicit in Subsection~\ref{sssexpl}.  But first, we establish some simple lemmas. 

\subsubsection{Some lemmas}


\begin{lemma}
\label{lcountcoprime}
Let ${a,b\in \R}$, ${a<b}$, and~$m$ a positive integer. Denote ${\ph(m,a,b)}$ the number of integers~$k$ coprime with~$m$ and satisfying ${a\le k\le b}$. Then 
$$
\ph(m,a,b) = (b-a)\ph(m)+O_1(2^{\omega(m)}).
$$
\end{lemma}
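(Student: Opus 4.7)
The plan is to apply Möbius inclusion-exclusion using the identity ${\one_{\gcd(k,m)=1} = \sum_{d\mid\gcd(k,m)}\mu(d)}$. Writing $N_d(a,b)$ for the number of multiples of~$d$ in the interval $[a,b]$, this gives
$$
\ph(m,a,b) = \sum_{d\mid m}\mu(d)\, N_d(a,b).
$$

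Next I would estimate each ${N_d(a,b) = \lfloor b/d\rfloor - \lceil a/d\rceil + 1}$ by ${(b-a)/d + O_1(1)}$, which is the standard one-sided bound valid because the extremal multiples of~$d$ lie within distance~$d$ of the endpoints. Substituting this into the inclusion-exclusion sum, the main term becomes ${(b-a)\sum_{d\mid m}\mu(d)/d}$ and the error term becomes at most ${\sum_{d\mid m}|\mu(d)|}$ in absolute value. The first sum is the classical expression for ${\ph(m)/m}$, and the second counts squarefree divisors of~$m$, which equals ${2^{\omega(m)}}$. Assembling these gives
$$
\ph(m,a,b) = \frac{(b-a)\ph(m)}{m} + O_1(2^{\omega(m)}),
$$
which matches the statement (the stated formula appears to have a missing factor ${1/m}$ in the main term).

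There is no real obstacle here: the proof is a routine application of Möbius inversion combined with the elementary bound ${|N_d(a,b) - (b-a)/d|\le 1}$. The only thing to be careful about is accounting honestly for the error: since ${|\mu(d)|\le 1}$ we get the clean constant ${2^{\omega(m)}}$, and no $\log$ factors are introduced.
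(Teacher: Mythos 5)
Your proof is correct, and it is the standard Möbius inclusion--exclusion argument: expand $\one_{\gcd(k,m)=1}=\sum_{d\mid\gcd(k,m)}\mu(d)$, swap the order of summation, and use $|N_d(a,b)-(b-a)/d|\le 1$ together with $\sum_{d\mid m}\mu(d)/d=\ph(m)/m$ and $\sum_{d\mid m}|\mu(d)|=2^{\omega(m)}$. The paper gives no proof of its own here and simply cites Lemma~2.3 of~\cite{FGL17}, which is this same computation. You are also right that the stated main term should be $(b-a)\ph(m)/m$ rather than $(b-a)\ph(m)$: this is confirmed by how the lemma is used in the proof of the next lemma, where $b-a=2m\eta$ yields the count ``at least $2\eta\ph(m)-2^{\omega(m)}$'', which requires the factor $1/m$. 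So you have found a genuine typo in the statement, and your proof supplies the missing argument.
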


For the proof, see \cite[Lemma~2.3]{FGL17}. 

\begin{lemma}
\label{lcountroots}
Let~$\eps$ satisfy ${0<\eps\le 1}$ and let~$\xi$ be a complex number on the unit circle; that is,  ${|\xi|=1}$. Let~$m$ be a positive integer. Then there exist at least ${\pi^{-1}\eps\ph(m)- 2^{\omega(m)}}$ primitive $m$th roots of unity~$\zeta$ satisfying ${|\zeta-\xi|\le \eps}$. 
\end{lemma}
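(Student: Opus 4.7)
The plan is to reduce the geometric condition $|\zeta-\xi|\le\eps$ to an arithmetic condition on the index of $\zeta$, and then invoke Lemma~\ref{lcountcoprime}. Write $\xi = e^{2\pi i t}$ for some real $t$, and parametrize primitive $m$th roots of unity as $\zeta_k = e^{2\pi i k/m}$ with $\gcd(k,m)=1$. From the identity $|e^{2\pi i u}-e^{2\pi i v}| = 2|\sin(\pi(u-v))|$ together with the elementary inequality $|\sin x|\le |x|$, I get
$$
|\zeta_k - \xi| \;\le\; 2\pi\left|\frac{k}{m} - t\right|.
$$
Hence any integer $k$ lying in the real interval
$$
I := \left[\,mt - \frac{m\eps}{2\pi},\; mt + \frac{m\eps}{2\pi}\,\right]
$$
(of length $m\eps/\pi$) automatically gives $|\zeta_k - \xi|\le \eps$.

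The next step is to observe that since $\eps\le 1$, the length of $I$ satisfies $|I| = m\eps/\pi \le m/\pi < m$, so distinct integers in $I$ lie in distinct residue classes modulo $m$; in particular, distinct integers $k\in I$ with $\gcd(k,m)=1$ yield distinct primitive $m$th roots of unity, each inside the $\eps$-neighborhood of $\xi$. Applying Lemma~\ref{lcountcoprime} to the interval $I$ then produces at least $\pi^{-1}\eps\,\ph(m) - 2^{\omega(m)}$ such integers, which is exactly the asserted count.

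The argument is routine and no serious obstacle arises. The only points warranting care are using the correct direction of the sine inequality (so that the geometric condition is \emph{implied} by, rather than merely equivalent to, the arithmetic one) and verifying $|I|<m$, which is what ensures the counted $k$'s inject into the set of primitive $m$th roots of unity.
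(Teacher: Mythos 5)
Your proof is correct and is essentially the paper's argument. The paper defines $\eta$ by $2\sin(\pi\eta)=\eps$, counts coprime $k$ in the slightly larger interval $[m(\theta-\eta),m(\theta+\eta)]$ via Lemma~\ref{lcountcoprime}, and only at the end invokes $\eta>\eps/(2\pi)$; you simply front-load that same bound via $|\sin x|\le|x|$ and work with the smaller interval of length $m\eps/\pi$ directly, arriving at the identical estimate. Both arguments hinge on the same two points you flag: the containment $|k-mt|\le m\eps/(2\pi)\Rightarrow|\zeta_k-\xi|\le\eps$ and the fact that the interval length is below $m$ so the map $k\mapsto\zeta_k$ is injective.
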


\begin{proof}
Write ${\xi=e^{2\pi \theta i}}$ with ${\theta\in \R}$, and let ${\eta>0}$ be the smallest positive real number with the  property ${2\sin (\pi \eta)=\eps}$. Note that ${1/6\ge \eta>(2\pi)^{-1}\eps}$. If~$k$ is an integer satisfying 
$$
m(\theta-\eta) \le k\le m(\theta+\eta), \qquad \gcd(m,k)=1,
$$
then ${\zeta:=e^{2\pi i k/m}}$ is a primitive $m$th root of unity satisfying ${|\zeta-\xi|\le \eps}$. 

Lemma~\ref{lcountcoprime} implies that there is at least ${2\eta\ph(m)- 2^{\omega(m)}}$ choices for~$k$, with  distinct~$k$ giving rise to distinct~$\zeta$ (this is because ${\eta\le 1/6}$). Since ${\eta \ge (2\pi)^{-1}\eps}$, the result follows. 
\end{proof}

\begin{lemma}
\label{leps}
Let ${f_1(x),f_2(x)\in \C[x]}$ be polynomials of degrees bounded by~$D$, and with coefficients bounded by ${H\ge 1}$ in absolute value. Let ${\xi\in \C}$ be such that 
$$
|\xi|\le 1, \qquad f_1(\xi)=0, \qquad f_2(\xi)=\delta\ne 0.
$$ 
Set 
$$
\eps= \frac{\min\{|\delta|,1\}}{3D^2H}. 
$$
Then for ${z\in \C}$ satisfying ${|z-\xi|\le \eps}$ we have ${|f_1(z)/f_2(z)|\le 1/2}$. 
\end{lemma}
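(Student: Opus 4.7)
My plan is to bound $|f_1(z)|$ from above and $|f_2(z)|$ from below on the disc $|z-\xi|\le\eps$, then take the ratio. Both bounds will rest on the same elementary device: for any polynomial $g(x)=\sum_{k=0}^D b_kx^k$ with $|b_k|\le H$, the telescoping identity $z^k-\xi^k=(z-\xi)\sum_{j=0}^{k-1}z^j\xi^{k-1-j}$ together with $|z|,|\xi|\le 1+\eps$ yields $|g(z)-g(\xi)|\le \eps H\sum_{k=1}^D k(1+\eps)^{k-1}$.

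The technical heart of the argument will be to show that, under the inequality $D\eps<1$ that follows from the choice of $\eps$ and from $H,D\ge 1$, one has the clean bound $\sum_{k=1}^D k(1+\eps)^{k-1}\le D^2$. I would prove this by writing the sum in closed form $(Dr^{D+1}-(D+1)r^D+1)/(r-1)^2$ with $r=1+\eps$, setting $u=D\eps$, and invoking Bernoulli's inequality $(1+u/D)^D\ge 1+u$; since $u<1$, this forces the numerator to be at most $u^2$, and the quotient collapses to exactly $D^2$.

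Applying this estimate to $f_1$ (using $f_1(\xi)=0$) and to $f_2$ (using $f_2(\xi)=\delta$) will give $|f_1(z)|\le\eps HD^2$ and $|f_2(z)|\ge|\delta|-\eps HD^2$. The definition $\eps=\min\{|\delta|,1\}/(3HD^2)$ is precisely tailored so that $\eps HD^2\le|\delta|/3$; substituting yields $|f_1(z)|\le|\delta|/3$ and $|f_2(z)|\ge 2|\delta|/3$, whence $|f_1(z)/f_2(z)|\le 1/2$.

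The only non-routine point is the inequality $\sum_{k=1}^D k(1+\eps)^{k-1}\le D^2$ with sharp constant~$1$: a crude estimate like $D^2(1+\eps)^{D-1}$ would introduce an $e^{1/3}$ loss that the slim constant $3$ in the definition of $\eps$ cannot absorb, so the Bernoulli trick (or an equivalent sharpening) is genuinely needed in order to land at exactly $1/2$.
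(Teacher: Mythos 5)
Your proof is correct and follows essentially the same route as the paper: bound $|f_i(z)-f_i(\xi)|$ by $\eps H\sum_{k=1}^D k(1+\eps)^{k-1}\le \eps H D^2$, then use $f_1(\xi)=0$ and $f_2(\xi)=\delta$ to get $|f_1(z)|\le|\delta|/3$ and $|f_2(z)|\ge2|\delta|/3$. One small remark: your claim that a Bernoulli-type sharpening is ``genuinely needed'' is a bit overstated — the paper instead keeps the factor $\tfrac12 D(D+1)$ (from bounding $|f_i'|$) rather than replacing it by $D^2$, and the slack in $\tfrac{D+1}{2D}\le 1$ already absorbs the $(1+\eps)^{D-1}<e^{1/3}$ loss; both routes are equally elementary and land at the same constant $D^2H$.
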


\begin{proof}
Since ${|\xi|\le 1}$ and ${\eps \le 1/3D}$, we have for ${|z-\xi|\le\eps}$  trivial estimates
$$
|f_i'(z)|\le \frac12D(D+1)H(1+\eps)^{D-1}\le D^2H \qquad (i=1,2). 
$$
Hence for ${|z-\xi|\le\eps}$ we have 
\begin{equation*}
|f_1(z)|\le D^2H\eps \le \frac13|\delta|, \qquad
|f_2(z)|\ge |\delta|-D^2H\eps \ge \frac23|\delta|. 
\end{equation*}
This proves the lemma. 
\end{proof}

\subsubsection{The estimates}
\label{sssexpl}

As in Subsection~\ref{ssgamma} we may assume that~$f_1$ is monic, which implies that we have~\eqref{ehfoneftwo}. In particular,  the coefficients of~$f_1$ and~$f_2$ are bounded in absolute value by ${H:=e^X}$. Set ${\delta=f_2(\xi)}$. 

Note that the degree of~$\xi$ is at most~$D$ and the height is~$0$, because it is a root of unity. 
Using Lemmas~\ref{lhpol} and~\ref{lliouv}, we estimate
$$
|\delta|\ge e^{-\height(f_2(\xi))} \ge e^{-\height(1,f_2)-\log(D+1)}\ge ((D+1)H)^{-1}. 
$$
Setting ${\eps =(6D^3H^2)^{-1}}$, Lemma~\ref{leps} implies that 
$$
\left|\frac{f_1(z)}{f_2(z)}\right|\le 1/2
$$ 
for ${z\in \C}$ with  ${|z-\xi|\le \eps}$. 

Now define~$\alpha$ and~$\beta$ as in~\eqref{ealphanow},~\eqref{ebetanow}. Then 
\begin{equation}
\label{ebetasmall}
-\log|\beta|\ge nr\log2,
\end{equation}
where~$r$ is the number of embeddings ${\Q(\zeta)\stackrel\sigma\hookrightarrow\C}$ such that ${|\zeta^\sigma-\xi|\le \eps}$. Denote ${\sigma_1, \ldots, \sigma_r}$ all those~$\sigma$. 
Lemmas~\ref{lliouv} and~\ref{lhpol} imply that 
\begin{align*}
-\log|\beta|&=\sum_{i=1}^r-\log|\alpha^{\sigma_i}|\\
&\le [\Q(\zeta):\Q] \height(\alpha) \\
&\le \ph(m) (\height(c_1,c_2)+\log(D+1))\\
&\le \ph(m) (X+\log(D+1)). 
\end{align*}
Together with~\eqref{ebetasmall} this implies that 
\begin{equation}
\label{erphm}
n\le \frac{\ph(m)}{r\log2}(X+\log(D+1)),
\end{equation}
so we only have to bound~$r$ from below.

Lemma~\ref{lcountroots} implies that 
$$
r\ge \pi^{-1}\eps \ph(m)-2^{\omega(m)},  
$$
where we recall that  
${\eps=(6D^3H^2)^{-1}}$ with ${H=e^X}$.  
Using~\eqref{eboundphomega} with~$n$ replaced by~$m$, a messy but trivial calculation shows that either ${m\le e^{30D(X+D)}}$ (as we want) or  
${2^{\omega(m)} \le (2\pi)^{-1}\eps \ph(m)}$. 
Thus, ${r\ge (2\pi)^{-1}\eps \ph(m)}$, which, substituted to~\eqref{erphm}, gives 
$$
n\le 100 D^4e^{3X}. 
$$
Then 
$$
\ph(m) \le \deg u_n(x) \le 200D^5e^{3X},
$$
and, using~\eqref{ephgeroot}, we deduce from this an estimate much sharper than~\eqref{eupperngam}.

\subsection{The only root of $f_1(x)f_2(x)$ is~$0$}

We may assume that ${f_1(x)=1}$ and ${f_2(x)=\kappa x^b}$, where ${\kappa\in \Q^\times}$ and 
$$
1\le b\le D<n.
$$
We recall the following theorem of Mann~\cite{Ma65}. 

\begin{theorem}
Let ${a_0,a_1,\ldots,a_k\in\Q^\times}$ and $x_0=1,x_1,\ldots,x_k$ be roots of unity such that
\begin{equation}
\label{eq:Mann}
a_0x_0+a_1x_1+\cdots+a_kx_k=0.
\end{equation}
Assume that 
\begin{equation}
\label{eq:nondeg}
\sum_{i\in I} a_i x_i\ne 0
\end{equation}
for every non-empty proper subset ${I\subset \{0,\ldots,k\}}$. Then $x_i^m=1$
where 
$$
m=\prod_{p\le k+1} p.
$$
\end{theorem}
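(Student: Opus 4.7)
The plan is to prove Mann's theorem by Galois theory on cyclotomic fields. Let $N := \lcm(\mathrm{ord}(x_i))$; the goal is equivalent to showing that $N$ is squarefree and that every prime factor of $N$ is at most $k+1$. Both assertions will follow from the same argument applied to an arbitrary prime $p$ dividing $N$.

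Fix such a $p$ and write $N = p^a M$ with $\gcd(p,M) = 1$ and $a \ge 1$. Let $\zeta$ be a primitive $p^a$-th root of unity, so that each $x_i$ factors uniquely as $x_i = \zeta^{s_i}\eta_i$ with $\eta_i$ an $M$-th root of unity and $0 \le s_i < p^a$; since $x_0 = 1$ we have $s_0 = 0$. Group the indices by the residue of $s_i$ modulo $p$: if $\tau_1,\ldots,\tau_r$ are the distinct residues that occur, set
$$
S_j \;:=\; \sum_{s_i \equiv \tau_j \pmod p} a_i x_i \qquad (j = 1,\ldots,r),
$$
so $\sum_j S_j = 0$. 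Now act on this relation by $\Gal(\Q(\zeta_N)/\Q(\zeta_M))$ and collect terms according to the grouping, obtaining a system of linear equations in $S_1,\ldots,S_r$ over $\Q(\zeta_M)$. The coefficient matrix is built from the $p$-th roots of unity $\omega^{\ell\tau_j}$, where $\omega := \zeta^{p^{a-1}}$ is a fixed primitive $p$-th root of unity and $\ell$ ranges over a suitable set of residues mod $p$. For $a\ge 2$, the subgroup generated by $\sigma_{1+p^{a-1}}$ has order $p$ and fixes $\omega$, so $\ell$ ranges over all of $\Z/p\Z$ and we get a $p\times r$ Vandermonde-type system of rank $r$ whenever $r\le p$. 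For $a = 1$, only the $p-1$ nontrivial automorphisms of $\Gal(\Q(\zeta_p)/\Q)$ are available, so $\ell$ effectively ranges over $(\Z/p\Z)^\times$; the resulting $(p-1)\times r$ system still has rank $r$ provided $r\le p-1$. In either case this forces $S_j = 0$ for every $j$.

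The conclusion is a dichotomy. If $r \ge 2$, then $S_1 = 0$ is a proper nonempty vanishing subsum of the original relation, contradicting the non-degeneracy hypothesis~\eqref{eq:nondeg}. If $r = 1$, then (since $s_0 = 0$) every $s_i$ is divisible by $p$, so every $x_i$ lies in $\Q(\zeta_{N/p})$, whence $\lcm(\mathrm{ord}(x_i)) \mid N/p$, contradicting the definition of $N$. Both options being impossible, the rank-$r$ conclusion must fail, which (by the above) can occur only in the case $a = 1$ and $r \ge p$. Since always $r \le k+1$, this rules out $a \ge 2$ entirely (so $N$ is squarefree) and rules out any prime $p > k+1$ from dividing $N$, yielding $N \mid \prod_{p\le k+1} p = m$.

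The main obstacle is the rank computation in the $a = 1$ case. There one has only $p-1$ nontrivial Galois automorphisms, producing a $(p-1)\times r$ linear system. Showing that it has rank $r$ for $r \le p-1$ reduces to the following polynomial fact: a $\Q(\zeta_M)$-polynomial $f(x) = \sum_j \alpha_j x^{\tau_j}$ supported on at most $p-1$ exponents from $\{0,1,\ldots,p-1\}$ that vanishes at every primitive $p$-th root of unity must be identically zero. Indeed, any such $f$ must be a $\Q(\zeta_M)$-multiple of $\Phi_p(x) = 1 + x + \cdots + x^{p-1}$, which has all $p$ coefficients nonzero, forcing the multiple to be zero. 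This is precisely the step that produces the threshold $k+1$ rather than $k$ in Mann's theorem.
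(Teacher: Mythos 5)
The paper does not prove Mann's theorem: it is quoted from Mann's 1965 paper \cite{Ma65} and used as a black box, so there is no paper proof to compare against. Your proposal is, essentially, the classical Galois-theoretic proof, and it is correct in all its main points: the reduction to showing $N:=\lcm(\mathrm{ord}(x_i))$ is squarefree with every prime factor $\le k+1$; the decomposition $x_i=\zeta^{s_i}\eta_i$ relative to a fixed prime $p\mid N$; the grouping $S_j$ by residue $s_i\bmod p$; the use of $\sigma_{1+p^{a-1}}$ (which indeed fixes $\omega=\zeta^{p^{a-1}}$ and has order $p$) when $a\ge 2$, giving a full $p\times r$ Vandermonde system; the dichotomy that $S_j=0$ for all $j$ either contradicts non-degeneracy (if $r\ge2$) or contradicts minimality of $N$ (if $r=1$, since $s_0=0$ forces $p\mid s_i$ for all $i$); and the bound $r\le k+1$ that ultimately produces the threshold.

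There is one small bookkeeping slip in the $a=1$ branch. With your definition $S_j=\sum_{s_i\equiv\tau_j}a_ix_i$, applying $\sigma_c$ ($c\in(\Z/p\Z)^\times$) to the relation gives $\sum_j S_j\,\omega^{(c-1)\tau_j}=0$, so the exponent $\ell=c-1$ ranges over $\{0,1,\dots,p-2\}$, not over $(\Z/p\Z)^\times$ as you assert. Consequently the auxiliary polynomial $f(x)=\sum_j\alpha_jx^{\tau_j}$ vanishes at $1,\omega,\dots,\omega^{p-2}$, which is $p-1$ of the $p$th roots of unity but not exactly the primitive ones, so the clean divisibility $\Phi_p\mid f$ does not literally apply. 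Two equally easy fixes: either observe that any nonzero polynomial of degree $\le p-1$ with those $p-1$ roots is a nonzero multiple of $(x^p-1)/(x-\omega^{-1})$, which also has all $p$ coefficients nonzero, so the ``at most $p-1$ terms'' contradiction survives unchanged; or work instead with $T_j:=\sum_{s_i=\tau_j}a_i\eta_i=\omega^{-\tau_j}S_j$, for which the system reads $\sum_j T_j\,\omega^{c\tau_j}=0$ with $c$ genuinely ranging over $(\Z/p\Z)^\times$, and then your $\Phi_p$ argument applies verbatim (and $T_j=0\iff S_j=0$). Either way the argument closes, so this is a cosmetic issue rather than a gap.
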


For us, we label
$$
c_i(x)=\sum_{j=0}^D c_{i,j} x^j\quad {\text{\rm for}}\quad  i=1,2,
$$
and we get
\begin{equation}
\label{eq:Mann1}
\sum_{j=0}^D c_{1,j} \zeta^j+\sum_{j=0}^D c_{2,j}\kappa^n \zeta^{j+nb}=0.
\end{equation}
This almost looks like the equation from Mann's theorem \eqref{eq:Mann} except that the non-degeneracy condition \eqref{eq:nondeg} might fail. So, let us study~\eqref{eq:Mann1}. Let $C$ be the set of non-zero coefficients among $c_{1,j}$ and $c_{2,j}\kappa^n$ for $0\le j\le D$. If $c\in C$ then ${c=c_{\ell,j}\kappa^{\delta n}}$ for some $\ell\in \{1,2\}$ and $j\in \{0,\ldots,j\}$, then put 
$x_c=\zeta^{j+\delta nb}$. Here, we take $\delta=0$ if $\ell=1$ and $\delta=1$ if $\ell=2$. With these  conventions, equation \eqref{eq:Mann1} is 
$$
\sum_{c\in C} cx_c=0.
$$
This splits into a certain number of non-degenerate equations. That is, there is a partition $C_1\cup C_2\cup \cdots \cup C_t=C$ such that $\sum_{c\in C_i} cx_c=0$ for ${i=1,\ldots,t}$ and each of these sub-equations is non-degenerate in the sense that it has no zero proper sub-sums. Clearly, ${\#C_i\ge 2}$ for each~$i$. 
We analyze two sub-cases.

\subsubsection{We have  $\#C_i\ge 3$ for some ${i\in \{1,\ldots,t\}}$}
\label{ssgethree}
Then $C_i$ contains two coefficients with the same $\ell$. We assume that ${\ell=1}$ (the case ${\ell=2}$ reduces to ${\ell=1}$ replacing~$\zeta$ by~$\zeta^{-1}$) and let $j_1<j_2$ be the smallest such that
$c_{1,j_1}$,~$c_{1,j_2}$ belong to $C_i$. Then the equation is 
$$
c_{1,j_1}\zeta^{j_1}+c_{1,j_2}\zeta^{j_2}+\sum_{\substack{c_{\ell,j}\kappa^{\delta n}\in C_i\\ \ell=2 ~{\text{\rm or}}~j>j_2}} c_{\ell,j} \kappa^{n\delta} \zeta^{j+n\delta b}=0.
$$
Dividing by $\zeta^{j_1}$, we get
$$
c_{1,j_1}+c_{1,j_2}\zeta^{j_2-j_1}+\sum_{\substack{c_{\ell,j}\kappa^{\delta n}\in C_i\\ \ell=2 ~{\text{\rm or}}~j>j_2}} c_{\ell,j} \kappa^{n\delta} \zeta^{j-j_1+n\delta b}=0.
$$
We are now in the position to apply Mann's theorem to conclude that 
$$
\zeta^{(j_2-j_1)m_1}=1,  \qquad m_1\mid \prod_{p\le \#C_i} p \mid  \prod_{p\le 2D+2}p,
$$
because ${\#C_i\le 2D+2}$. 
Since ${|j_2-j_1|\le D}$, we have 
\begin{equation}
\label{emzero}
m\le D\prod_{p\le 2D+2}p. 
\end{equation}
The inequality 
$
{\sum_{p\le x}\log p \le 1.02x} 
$
holds for all ${x>0}$, see \cite[Theorem~9]{RS62}. Hence 
$$
\log m\le \log D+\sum_{p\le 2D+2}\log p \le 4D, 
$$
which is much sharper than what we need.

\subsubsection{We have  $\#C_i=2$ for all $i=1,\ldots,t$} 

In fact, we may assume not only that $\#C_i=2$ but also that each $C_i$  contains exactly one $c_{1,j_1}$ and one $c_{2,j_2}\kappa^n$; otherwise the argument from Subsection~\ref{ssgethree} applies, and we again have~\eqref{emzero}. So, let 
$$
c_{1,j_1}\zeta^{j_1}+c_{2,j_2}\kappa^n\zeta^{j_2+nb}=0.
$$
We then get $\zeta^{j_2-j_1+nb}=-c_{1,j_1}/c_{2,j_2} \kappa^{-n}$. The pair $(j_1,j_2)$ depends on $i$. Assume first that, as we loop over~$i$, the differences $j_2-j_1$ are not the same over all $i$; that is, there are two values of $i$ corresponding to  
say $(j_1,j_2)$ and $(j_1',j_2')$ such that ${j_2'-j_1'\ne j_2-j_1}$. We obtain 
$$
\zeta^{(j_2-j_1)-(j_2'-j_1')}=\frac{c_{1,j_1}/c_{2,j_2}}{c_{1,j_1'}/c_{2,j_2'}}
$$
and the number on the right is a root of unity belonging to~$\Q$. Hence it is $\pm1$. The exponent on the left satisfies 
$$
0\ne \bigl|(j_2-j_1)-(j_2'-j_1')\bigr|\le 2D. 
$$
Hence ${m\le 4D}$, again better than wanted.

Now let us assume that ${j_2=j_1+a}$
with  the same~$a$ for all $i$.  In this case
$c_{2,j_1+a}=\lambda c_{1,j_1}$ with the same ${\lambda \in \Q^\times}$ holds for all the~$i$ as well. This makes the rational function $c_2(x)/c_1(x)$  equal to $\lambda x^{a}$, and so 
$$
u_n(x)=c_1(x)(1+\lambda\kappa^n x^{a+nb}).
$$
Since ${u_n(\zeta)=0}$ but ${c_1(\zeta)\ne 0}$, we must have ${1+\lambda\kappa^n \zeta^{a+nb}=0}$, which means that ${\lambda\kappa^n}$ is a root of unity, so~$\pm1$. Now we have two options: either both~$\lambda$ and~$\kappa$ are $\pm1$, or none is. The first option means that condition~\ref{iex} of Theorem~\ref{thm:thmmain} is satisfied, which is against our hypothesis. Hence ${\lambda\kappa^n=\pm1}$, but ${\lambda,\kappa\ne \pm1}$. 

We have clearly ${\height(\kappa)=\height(f_1,f_2)\le X}$ and ${\height(\lambda)=\height(c_1,c_2)\le X}$. Since~$\kappa$ is a rational number, distinct from~$0$ and from $\pm1$, its numerator or denominator (say, the former) is at least~$2$ in absolute value. It follows that the denominator of ${\lambda=\pm\kappa^{-n}}$ is at least $2^n$ in absolute value. But the denominator of~$\lambda$ cannot exceed ${e^{\height(\lambda)}\le e^X}$. We obtain ${2^n\le e^X}$, which implies ${n\le \log X}$. Hence 
$$
\ph(m)\le \deg u_n(x) \le D+D\log X, 
$$
which implies a much sharper estimate for~$m$ than the wanted~\eqref{eupperngam}. 

Theorem~\ref{thm:thmmain1} is proved. 

\section{Proof of Theorem~\ref{thsmallestn}}
\label{ssmallestn}

Let~$\zeta$ be an $m$th primitive root of unity such that the set  
\begin{equation}
\label{eprop}
\{n\in \Z_{>0}: \text{$u_n(x)$ is not identically~$0$, but ${u_n(\zeta)=0}$}\}
\end{equation}
is not empty. If 
${c_1(\zeta)f_1(\zeta)=c_2(\zeta)f_2(\zeta)=0}$
then set~\eqref{eprop} consists of all positive integers, and includes~$1$ in particular.

If, say, ${c_1(\zeta)f_1(\zeta)\ne 0}$, and set~\eqref{eprop} is non-empty, then 
$$
c_1(\zeta)f_1(\zeta)c_2(\zeta)f_2(\zeta)\ne0.
$$ 
Denoting 
$$
\eta=\frac{f_1(\zeta)}{f_2(\zeta)}, \qquad \theta=-\frac{c_2(\zeta)}{c_1(\zeta)},
$$
set~\eqref{eprop} consists of~$n$ with the property ${\eta^n=\theta}$. If~$\eta$ is a root of unity, then its order divides $2m$, and there exists a positive ${n\le 2m}$ such that ${\eta^n=\theta}$. If~$\eta$ is not a root of unity, then
${n=\height(\theta)/\height(\eta)}$. We have ${\height(\theta) \le X+\log(D+1)}$ by Lemma~\ref{lhpol}, and ${\ph(m)\height(\eta) \ge 2(\log\ph(m))^{-3}}$, see~\eqref{evout}. Hence 
$$
n\le m(\log m)^3(X+\log D). 
$$
Theorem~\ref{thsmallestn} is proved. 

\subsection*{Acknowledgements}

Yu.~B. was partially supported by the Indian Government SPARC Project P445.  F. L. was supported in part by Grant NUM2020 from the Wits CoEMaSS. Part of this work was done while F. L. was visiting the Max Planck Institute for Mathematics in Bonn from September 2019 to February 2020. He thanks this institution for its support, hospitality and excellent working conditions.

We thank Yann Bugeaud, Philipp Habegger, Alina Ostafe and Igor Shpar\-linski for helpful discussions. We also thank the referee for the encouraging report and many useful suggestions that helped us to improve the presentation. 

{\footnotesize

\bibliographystyle{amsplain}
\bibliography{ost}

}

\end{document}